\def\thesection{\arabic{section}}
\def\theequation{\thesection.\arabic{equation}}
\newcommand{\Om} {\Omega}
\newcommand{\la} {\lambda}
\newcommand{\La} {\Lambda}
\newcommand{\noi} {\noindent}
\newcommand{\uline} {\underline}
\newcommand{\oline} {\overline}
\newcommand{\mb} {\mathbb}
\newcommand{\mc} {\mathcal}
\markboth{\small } {\small Mixed local and nonlocal singular problems}
\def\theequation{\@arabic{\c@section}.\@arabic{\c@equation}}
\newtheorem{Theorem}{Theorem}[section]
\newtheorem{Lemma}[Theorem]{Lemma}
\newtheorem{Remark}[Theorem]{Remark}
\newtheorem{Definition}[Theorem]{Definition}
\begin{document}

{\vspace{0.01in}}

\title{On a class of mixed local and nonlocal semilinear elliptic equation with singular nonlinearity}
\author{Prashanta Garain}

\date{}
\maketitle

\begin{abstract}
In this article, we consider a combination of local and nonlocal Laplace equation with singular nonlinearities. For such mixed problems, we establish existence of at least one weak solution for a parameter dependent singular nonlinearity and existence of multiple solutions for purturbed singular nonlinearity. Our argument is based on the variational and approximation approach. 
\end{abstract}

\maketitle

\noi {Keywords: Mixed local and nonlocal equation, singular nonlinearity, existence, regularity.}

\noi{\textit{2020 Mathematics Subject Classification: 35M10, 35R11, 35B65, 35J75.}

\tableofcontents
\maketitle

\section{Introduction}
In this article, we consider the following mixed local and nonlocal semilinear equation with singular nonlinearity
\begin{equation}\label{maineqn}
-\Delta u+(-\Delta)^s u=g(x,u)\text{ in }\Omega,\quad u>0\text{ in }\Omega,\quad u=0\text{ in }\mathbb{R}^n\setminus\Omega,
\end{equation}
where $\Omega\subset\mathbb{R}^n$ is a bounded domain with $n\geq 2$. Here $-\Delta$ is the classical Laplace operator
and $(-\Delta)^s$, $s\in(0,1)$ is the fractional Laplace operator defined by
$$
(-\Delta)^s u=\text{P.V.}\int_{\mathbb{R}^n}\frac{u(x)-u(y)}{|x-y|^{n+2s}}\,dy,
$$
where P.V. denotes the principal value.
We establish existence of at least one weak solution of the problem \eqref{maineqn} for the purely singular nonlinearity $g$ of the form $(g_1)$ given by
$$
g(x,u)=\la h(u)u^{-\gamma},\quad \leqno(g_1)
$$
where $\la>0,\gamma\in(0,1)$ and 
\begin{enumerate}
\item[$(h_1)$] $h:[0,\infty)\to\mathbb{R}$ is a continuous nondecreasing function such that $h(0)>0$ and
\item[$(h_2)$]
\begin{equation}\label{H1f1}
\lim_{t\to 0}\frac{h(t)}{t^\gamma}=\infty, \quad \lim_{t\to\infty}\frac{h(t)}{t^{\gamma+1}}=0.
\end{equation}
\end{enumerate}
Further, we establish multiplicity result for the equation \eqref{maineqn} with the purturbed singular nonlinearity $g$ of the form $(g_2)$ given by
$$
g(x,u)=\la u^{-\gamma}+u^q,\quad\leqno(g_2)
$$
where $\la>0$, $\gamma\in(0,1)$ and $q\in(1,2^*-1)$ with $2^*=\frac{2n}{n-2}$ if $n>2$ and $2^*=\infty$ if $n=2$.

Before proceeding further, we state the functional setting to study the problem \eqref{maineqn}.

\subsection{Functional setting and useful results}
In this section, we present some known results for the fractional Sobolev space, see \cite{Hitchhiker'sguide} for more details.
Let 
$E\subset \mathbb{R}^n$
be a measurable set and $|E|$ denote its Lebesgue measure. Recall that the Lebesgue space 
$L^{2}(E),$
is defined as the space of measurable functions $u:E\to\mathbb{R}$ with the finite norm
$$ \|u\|_{L^2(E)}=
\left(\int\limits_{E}|u(x)|^2~dx\right)^{1/2}.
$$
Here and in the rest of the paper, it is assumed that $\Om\subset\mathbb{R}^n$ with $n\geq 2$ is a bounded smooth domain. The Sobolev space $H^1(\Omega)$ is defined 
as the Banach space of locally integrable weakly differentiable functions
$u:\Omega\to\mathbb{R}$ equipped with the following norm: 
\[
\|u\|_{H^{1}(\Omega)}=\| u\|_{L^2(\Omega)}+\|\nabla u\|_{L^2(\Omega)}.
\]
The space $H^{1}(\mathbb{R}^n)$ is defined analogously.
To deal with mixed problems, we use the space $H^{1}_0(\Omega)=\{u\in H^{1}(\mathbb{R}^n):u=0\text{ in }\mathbb{R}^n\setminus\Om\}$ under the norm $\|u\|=\|\nabla u\|_{L^2(\Om)}$. It can be shown that $H^{1}_0(\Omega)$ is a real separable and reflexive Banach space, see \cite{BDVV2, Biagi1, SV}.  

The fractional Sobolev space $H^{s}(\Omega)$, $0<s<1$, is defined by
$$
H^{s}(\Omega)=\Big\{u\in L^2(\Omega):\frac{|u(x)-u(y)|}{|x-y|^{\frac{n}{2}+s}}\in L^2(\Omega\times \Omega)\Big\},
$$
which is endowed with the norm
$$
\|u\|_{H^{s}(\Omega)}=\left(\int_{\Omega}|u(x)|^2\,dx+\int_{\Omega}\int_{\Omega}\frac{|u(x)-u(y)|^2}{|x-y|^{n+2s}}\,dx\,dy\right)^\frac{1}{2}.
$$
For the next result, see \cite[Proposition $2.2$]{Hitchhiker'sguide}.
\begin{Lemma}\label{defineq}
There exists a constant $C=C(n,s)>0$ such that
$$
\|u\|_{H^{s}(\Om)}\leq C\|u\|_{H^{1}(\Om)},\quad\forall\,u\in H^{1}(\Omega).
$$
\end{Lemma}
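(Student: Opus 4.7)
Since $\|u\|_{L^2(\Omega)}$ appears in both norms, the only work is to control the Gagliardo seminorm
\[
[u]_{H^s(\Omega)}^2 \;:=\; \int_\Omega\!\int_\Omega \frac{|u(x)-u(y)|^2}{|x-y|^{n+2s}}\,dx\,dy
\]
by $\|u\|_{H^1(\Omega)}^2$. My plan is to split the domain of integration into the far-diagonal piece $A=\{(x,y)\in\Omega\times\Omega:|x-y|\geq 1\}$ and the near-diagonal piece $B=\{(x,y)\in\Omega\times\Omega:|x-y|<1\}$, and estimate each separately.

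On $A$, I will simply apply $|u(x)-u(y)|^2\leq 2|u(x)|^2+2|u(y)|^2$, Fubini, and the change of variable $z=x-y$; since the kernel $|z|^{-n-2s}$ is integrable in $\{|z|\geq 1\}$, this yields
\[
\iint_A \frac{|u(x)-u(y)|^2}{|x-y|^{n+2s}}\,dx\,dy \;\leq\; C(n,s)\,\|u\|_{L^2(\Omega)}^2.
\]
On $B$ the natural move is to write $|u(x)-u(y)|\leq |x-y|\int_0^1|\nabla u(y+t(x-y))|\,dt$, but this requires the segment $[y,x]$ to lie inside $\Omega$, which may fail. This is where I expect the main technical obstacle to sit; I will resolve it by using the fact that $\Omega$ is bounded and smooth to invoke the standard extension operator, producing $\tilde u\in H^1(\mathbb{R}^n)$ with $\tilde u|_\Omega=u$ and $\|\tilde u\|_{H^1(\mathbb{R}^n)}\leq C(\Omega)\|u\|_{H^1(\Omega)}$.

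With $\tilde u$ at hand, I first approximate it by functions smooth on $\mathbb{R}^n$, apply the fundamental theorem of calculus together with the Cauchy--Schwarz inequality to get, for any $z\in\mathbb{R}^n$,
\[
\int_{\mathbb{R}^n}|\tilde u(x)-\tilde u(x-z)|^2\,dx \;\leq\; |z|^2\,\|\nabla\tilde u\|_{L^2(\mathbb{R}^n)}^2,
\]
and then pass to the limit. Using this pointwise-in-$z$ bound and changing variables,
\[
\iint_B \frac{|u(x)-u(y)|^2}{|x-y|^{n+2s}}\,dx\,dy \;\leq\; \|\nabla\tilde u\|_{L^2(\mathbb{R}^n)}^2 \int_{|z|<1}|z|^{2-n-2s}\,dz,
\]
and the last integral converges because in polar coordinates the radial exponent $1-2s$ exceeds $-1$ (this is where $s<1$ is crucially used). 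Combining both pieces with the extension estimate delivers $[u]_{H^s(\Omega)}^2\leq C(n,s,\Omega)\|u\|_{H^1(\Omega)}^2$, which together with the trivial $L^2$ part gives the lemma. The pivotal points are therefore the extension step (to pass from $\Omega$ to $\mathbb{R}^n$) and the integrability threshold $s<1$; once those are settled, the rest is routine.
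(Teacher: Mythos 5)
Your argument is correct and is essentially the standard proof of \cite[Proposition 2.2]{Hitchhiker'sguide}, which is exactly what the paper cites for this lemma: split the Gagliardo seminorm at $|x-y|=1$, bound the far-diagonal part by the $L^2$ norm using the integrability of $|z|^{-n-2s}$ at infinity, and bound the near-diagonal part via an $H^1$-extension to $\mathbb{R}^n$ together with the translation estimate and the integrability of $|z|^{2-n-2s}$ near the origin. The only caveat is that the constant you obtain necessarily depends on $\Omega$ through the extension operator (as it must, since the inclusion $H^1(\Omega)\subseteq H^s(\Omega)$ can fail for irregular domains), whereas the statement writes $C(n,s)$; this imprecision is inherited from the cited reference and is harmless here because $\Omega$ is a fixed bounded smooth domain.
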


Next, we have the following result from \cite[Lemma $2.1$]{Silva}.
\begin{Lemma}\label{locnon1}
There exists a constant $C=C(n,s,\Omega)$ such that
\begin{equation}\label{locnonsem}
\iint\limits_{\mathbb{R}^{2n}}\frac{|u(x)-u(y)|^2}{|x-y|^{n+2s}}\,dx\,dy\leq C\int_{\Omega}|\nabla u|^2\,dx,\quad\forall\,u\in H_0^{1}(\Omega).
\end{equation}
\end{Lemma}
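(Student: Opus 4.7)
\bigskip

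\noindent\textbf{Proof plan for Lemma \ref{locnon1}.}

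The strategy is to split the double integral over $\mathbb{R}^{2n}$ according to whether each variable lies in $\Omega$ or in $\mathbb{R}^n\setminus\Omega$, exploiting that $u$ vanishes outside $\Omega$. First I would write
\[
\iint_{\mathbb{R}^{2n}}\frac{|u(x)-u(y)|^2}{|x-y|^{n+2s}}\,dx\,dy = I_1 + 2I_2 + I_3,
\]
where $I_1$ integrates over $\Omega\times\Omega$, $I_2$ over $\Omega\times(\mathbb{R}^n\setminus\Omega)$, and $I_3$ over $(\mathbb{R}^n\setminus\Omega)\times(\mathbb{R}^n\setminus\Omega)$. Since $u\equiv 0$ outside $\Omega$, $I_3=0$ and the integrand of $I_2$ simplifies to $|u(x)|^2/|x-y|^{n+2s}$.

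For $I_1$, Lemma \ref{defineq} immediately gives $I_1\leq [u]^2_{H^s(\Omega)}\leq C\|u\|^2_{H^1(\Omega)}$, and Poincar\'e's inequality on the bounded domain $\Omega$ (available since $u\in H^1_0(\Omega)$) upgrades the right-hand side to $C\|\nabla u\|^2_{L^2(\Omega)}$.

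The main work is in estimating $I_2$. I would use the elementary observation that $\mathbb{R}^n\setminus\Omega\subset\{y:|y-x|\geq d(x)\}$ for $x\in\Omega$, where $d(x)=\mathrm{dist}(x,\partial\Omega)$; integrating in polar coordinates gives
\[
\int_{\mathbb{R}^n\setminus\Omega}\frac{dy}{|x-y|^{n+2s}}\leq \frac{\omega_{n-1}}{2s}\,d(x)^{-2s}.
\]
This reduces the task to establishing the weighted bound $\int_\Omega |u(x)|^2 d(x)^{-2s}\,dx\leq C\|\nabla u\|^2_{L^2(\Omega)}$, which is the genuine difficulty of the lemma. Here I would invoke the standard Hardy inequality on the smooth bounded domain $\Omega$, namely $\int_\Omega |u|^2/d(x)^2\,dx\leq C\|\nabla u\|_{L^2(\Omega)}^2$ for $u\in H^1_0(\Omega)$, together with the pointwise comparison $d(x)^{-2s}\leq R^{2-2s}d(x)^{-2}$ on $\{d(x)\leq R\}$ with $R=\mathrm{diam}(\Omega)$, and Poincar\'e on the (bounded) region where $d(x)$ is bounded below.

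The main obstacle is precisely the control of $I_2$, which encodes the boundary behavior of $u$; everything else is either a direct application of Lemma \ref{defineq} or a trivial vanishing. An alternative route, bypassing the Hardy estimate, would be to note that the zero extension of $u$ belongs to $H^1(\mathbb{R}^n)$ and then apply the $\mathbb{R}^n$-analogue of Lemma \ref{defineq} (Proposition 2.2 in \cite{Hitchhiker'sguide} used on the whole space) before finishing with Poincar\'e on $\Omega$; this is cleaner conceptually but relies on the version of the embedding over $\mathbb{R}^n$ rather than over $\Omega$ as stated.
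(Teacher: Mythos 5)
Your argument is correct. Note first that the paper does not prove this lemma at all: it is quoted directly from \cite[Lemma 2.1]{Silva}, so there is no internal proof to match; the cited proof is essentially the ``alternative route'' you mention at the end, namely extend $u$ by zero to get an element of $H^1(\mathbb{R}^n)$, bound the Gagliardo seminorm over $\mathbb{R}^{2n}$ by splitting into $|x-y|\leq 1$ (controlled by $\|\nabla u\|_{L^2(\mathbb{R}^n)}^2$ via the standard difference-quotient estimate) and $|x-y|>1$ (controlled by $\|u\|_{L^2(\mathbb{R}^n)}^2$), and finish with Poincar\'e on $\Omega$. Your primary route---splitting $\mathbb{R}^{2n}$ into $\Omega\times\Omega$, the two mixed pieces, and the trivially vanishing exterior piece, then reducing the mixed term to the weighted bound $\int_\Omega |u|^2 d(x)^{-2s}\,dx\lesssim\|\nabla u\|_{L^2(\Omega)}^2$ via $d(x)^{-2s}\leq (\mathrm{diam}\,\Omega)^{2-2s}d(x)^{-2}$ and the Hardy inequality---is a genuinely different and perfectly valid proof. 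All steps check out: the polar-coordinate bound $\int_{\mathbb{R}^n\setminus\Omega}|x-y|^{-n-2s}\,dy\leq \tfrac{\omega_{n-1}}{2s}d(x)^{-2s}$ is correct, and the Hardy inequality for $H_0^1$ of a bounded smooth domain is available under the paper's standing assumptions. The trade-off is that your route leans on boundary regularity of $\Omega$ (through Hardy) and is arguably heavier machinery than needed since $2s<2$ strictly, whereas the extension route works for an arbitrary bounded open set and uses only elementary splitting; on the other hand, your decomposition isolates exactly where the boundary behaviour of $u$ enters, which is conceptually informative. One cosmetic remark: the final appeal to ``Poincar\'e on the region where $d(x)$ is bounded below'' is redundant, since $d(x)\leq\mathrm{diam}\,\Omega$ on all of $\Omega$ and hence the pointwise comparison $d(x)^{-2s}\leq (\mathrm{diam}\,\Omega)^{2-2s}d(x)^{-2}$ already holds globally.
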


For the following Sobolev embedding, see, for example, \cite{Evans}.

\begin{Lemma}\label{emb}
The embedding operators
\[
H_0^{1}(\Omega)\hookrightarrow
\begin{cases}
L^t(\Om),&\text{ for }t\in[1,2^{*}],\text{ if }n>2,\\
L^t(\Om),&\text{ for }t\in[1,\infty),\text{ if }n=2
\end{cases}
\]
are continuous.
\end{Lemma}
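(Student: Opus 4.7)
The strategy is to reduce the claim to the classical Gagliardo--Nirenberg--Sobolev inequality on $\mathbb{R}^n$ and then interpolate to reach every subcritical exponent. Since $H^1_0(\Omega)$ is (by its very definition) embedded into $H^1(\mathbb{R}^n)$ through extension by zero, and since $C_c^\infty(\Omega)$ is dense in $H^1_0(\Omega)$, it suffices to establish the critical inequality $\|u\|_{L^{2^*}(\mathbb{R}^n)} \leq C\|\nabla u\|_{L^2(\mathbb{R}^n)}$ for $u \in C_c^\infty(\mathbb{R}^n)$ when $n>2$, and then pass to the limit by lower semicontinuity of the $L^{2^*}$ norm under weak convergence.

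For the case $n>2$, I would first establish Gagliardo's base inequality $\|u\|_{L^{n/(n-1)}(\mathbb{R}^n)} \leq \prod_{i=1}^n \|\partial_i u\|_{L^1(\mathbb{R}^n)}^{1/n}$. This uses the pointwise bound $|u(x)| \leq \int_{-\infty}^\infty |\partial_i u|\,dx_i$ for each $i$ followed by an iterated application of the generalized H\"older inequality on the remaining $n-1$ variables. From this, the $L^2$-version is recovered by substituting $|u|^\gamma$ with $\gamma=2(n-1)/(n-2)$ so that $\gamma\cdot n/(n-1)=2^*$, estimating $\||u|^{\gamma-1}\nabla u\|_{L^1} \leq \|u\|_{L^{2^*}}^{\gamma-1}\|\nabla u\|_{L^2}$ by H\"older, and absorbing the resulting $L^{2^*}$ factor on the left. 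This yields the continuous embedding $H^1_0(\Omega) \hookrightarrow L^{2^*}(\Omega)$.

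For a general subcritical exponent $t\in[1,2^*)$ in dimension $n>2$, continuity into $L^t(\Omega)$ follows by combining the critical embedding with the boundedness of $\Omega$ via H\"older's inequality: $\|u\|_{L^t(\Omega)} \leq |\Omega|^{1/t-1/2^*}\|u\|_{L^{2^*}(\Omega)}$. In dimension $n=2$ the critical exponent is formally infinite but the embedding into $L^\infty$ fails, so instead I would argue that for every $q\in[1,2)$ one has $\|u\|_{W^{1,q}_0(\Omega)} \leq C(q,|\Omega|)\|u\|_{H^1_0(\Omega)}$ by H\"older, together with the $W^{1,q}$ Gagliardo inequality applied in the plane giving $W^{1,q}_0(\Omega) \hookrightarrow L^{2q/(2-q)}(\Omega)$. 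Since $2q/(2-q)\to\infty$ as $q\to 2^-$, every finite $t\geq 1$ is attained by choosing $q$ sufficiently close to $2$.

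The main obstacle is purely computational and lies in executing the Gagliardo iteration cleanly; no conceptual difficulty arises and, in particular, the mixed local/nonlocal nature of the operator plays no role here since the statement concerns only the local Sobolev space. This is why the result is standardly deferred to a textbook reference such as Evans.
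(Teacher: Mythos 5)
Your argument is correct: it is precisely the Gagliardo--Nirenberg--Sobolev proof (base $L^{n/(n-1)}$ inequality, substitution $|u|^{\gamma}$ with $\gamma=2(n-1)/(n-2)$, H\"older on the bounded domain for subcritical $t$, and the $W^{1,q}$ detour for $n=2$) that the paper itself does not reproduce but simply defers to Evans. Since the paper's ``proof'' is the citation and your write-up is the standard argument contained in that reference, the two approaches coincide.
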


Now we are ready to define the notion of weak solutions for the problem \eqref{maineqn}. 

\begin{Definition}\label{wksoldef}(Weak Solution)
Let $g$ be either of the form $(g_1)$ or $(g_2)$. We say that $u\in H_0^{1}(\Omega)$ is a weak subsolution (or supersolution) of \eqref{maineqn}, if $u>0$ in $\Omega$ such that for every $\omega\Subset\Omega$, there exists a positive constant $c(\omega)$ with $u\geq c(\omega)>0$ in $\omega$ and
\begin{equation}\label{wksoleqn}
\int_{\Omega}\nabla u\nabla\phi\,dx+\iint_{\mathbb{R}^{2n}}\frac{(u(x)-u(y))(\phi(x)-\phi(y))}{|x-y|^{n+2s}}\,dx dy\leq (\text{ or })\geq \int_{\Omega}g(x,u)\phi\,dx,
\end{equation}
for every nonnegative $\phi\in C_c^{1}(\Omega)$. We say that $u\in H_0^{1}(\Omega)$ is a weak solution of \eqref{maineqn}, if the equality in \eqref{wksoleqn} holds for every $\phi\in C_c^{1}(\Omega)$ without a sign restriction.
\end{Definition}

\begin{Remark}\label{welldef}
Note that by Lemma \ref{defineq} and Lemma \ref{locnon1}, it follows that Definition \ref{wksoldef} is well stated.
\end{Remark}

\begin{Remark}\label{tstrmk}
Let  $u\in H_0^{1}(\Omega)$ be a weak solution of the problem \eqref{maineqn} when $g$ is either of the form $(g_1)$ or $(g_2)$. Then following the lines of the proof of \cite[Lemma 5.1]{GU}, it follows that the equality in \eqref{wksoleqn} holds, for every $\phi\in H_0^{1}(\Omega)$.
\end{Remark}

\subsection{Statement of the main results:} Our main results in this article reads as follows:

\begin{Theorem}\label{mthm}
Let $0<\gamma<1$ and $g$ be of
the form $(g_1)$. Then for every $\lambda>0$, there exists a weak solution $u\in H_0^{1}(\Omega)\cap L^{\infty}(\Omega)$ of
the problem \eqref{maineqn}.
\end{Theorem}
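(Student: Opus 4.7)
The natural strategy is to regularise the singularity, solve a sequence of non\nobreakdash-singular approximating problems, derive uniform a priori estimates, and then pass to the limit. For each $k\in\mathbb{N}$ I would consider
\begin{equation*}
-\Delta u_k+(-\Delta)^s u_k=\lambda\,\frac{h(u_k^+)}{\bigl(u_k^+ + \tfrac{1}{k}\bigr)^{\gamma}}\quad\text{in }\Omega,\qquad u_k=0\text{ in }\mathbb{R}^n\setminus\Omega.
\end{equation*}
The nonlinearity is now continuous and bounded on bounded sets, and by $(h_2)$ it is sub-critical, so a Schauder fixed-point argument applied to the map $w\mapsto u$ with $u$ solving the linear mixed problem with source $\lambda h(w^+)/(w^+ + 1/k)^{\gamma}$ (using Lemmas \ref{defineq} and \ref{locnon1} to handle the bilinear form and Lemma \ref{emb} for compactness) produces $u_k\in H^{1}_0(\Omega)\cap L^{\infty}(\Omega)$. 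The strong maximum principle for the mixed operator (cf.\ \cite{BDVV2}) gives $u_k>0$ in $\Omega$.

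Next, uniform estimates in $k$. Testing with $u_k\in H^{1}_0(\Omega)$ and using $u_k/(u_k+1/k)^{\gamma}\leq u_k^{1-\gamma}$ yields
\begin{equation*}
\|\nabla u_k\|_{L^{2}(\Omega)}^{2}\leq \lambda\int_\Omega h(u_k)\,u_k^{\,1-\gamma}\,dx.
\end{equation*}
From $(h_2)$ there exist, for every $\e>0$, constants $C_\e>0$ such that $h(t)\leq C_\e+\e\,t^{\gamma+1}$; hence $h(u_k)u_k^{1-\gamma}\leq C_\e u_k^{1-\gamma}+\e\,u_k^{2}$. Choosing $\e$ small enough relative to the Poincar\'e constant to absorb the $u_k^{2}$ term, and bounding $\int u_k^{1-\gamma}$ by H\"older, produces a uniform $H^{1}_0$-bound. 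A Stampacchia/Moser iteration with test functions $(u_k-t)^{+}$ (exploiting the same sub-linear bound on $g_k(t)\leq \lambda C t^{-\gamma}+\lambda\e\,t$ for large $t$) then yields a uniform $L^{\infty}$-estimate.

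For positivity on compact subdomains, by $(h_1)$ and the $L^{\infty}$-bound
\begin{equation*}
\lambda\,\frac{h(u_k)}{(u_k+1/k)^{\gamma}}\geq\frac{\lambda\,h(0)}{(\|u_k\|_{L^\infty}+1)^{\gamma}}\eqdef c_0>0
\end{equation*}
uniformly in $k$. Comparing $u_k$ with the solution $v\in H^{1}_0(\Omega)$ of $-\Delta v+(-\Delta)^s v=c_0$, which is strictly positive in the interior by the mixed strong maximum principle, gives $u_k\geq v$ in $\Omega$ and therefore $u_k\geq c(\omega)>0$ in $\omega$ for every $\omega\Subset\Omega$, with $c(\omega)$ independent of $k$.

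Finally, along a subsequence $u_k\rightharpoonup u$ in $H^{1}_0(\Omega)$, $u_k\to u$ a.e.\ and in $L^{p}(\Omega)$ for every $p<2^{*}$. The local and nonlocal bilinear forms against any $\phi\in C^{1}_c(\Omega)$ pass by weak convergence. For the singular term, on $\supp\phi\Subset\Omega$ the uniform bound $u_k+1/k\geq c(\supp\phi)$ makes $\lambda h(u_k)/(u_k+1/k)^{\gamma}$ uniformly bounded there and pointwise convergent to $\lambda h(u)u^{-\gamma}$, so dominated convergence identifies the limit; the inequality $u\geq v$ transfers in the limit and provides the positivity requirement of Definition \ref{wksoldef}. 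I expect the main obstacle to be this last step: the singular right-hand side cannot be controlled globally in $\Omega$, and the argument works precisely because Definition \ref{wksoldef} only tests against $\phi\in C^{1}_c(\Omega)$, whose supports are compactly contained in $\Omega$ where Step~3 gives the required uniform lower bound.
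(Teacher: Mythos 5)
Your proposal is correct in outline, but it follows a genuinely different route from the paper. The paper proves Theorem \ref{mthm} by the sub--supersolution method: Lemma \ref{Subsuplemma} (a constrained minimization of $J_\la$ over the order interval plus a Haitao-type perturbation argument) reduces everything to exhibiting an ordered pair $\underline{u}\le\overline{u}$, and these are built explicitly as $\underline{u}=a_\la e_1$ from the first eigenfunction of \eqref{evp} (using the first limit in $(h_2)$) and $\overline{u}=b_\la v_0$ from the solution of the pure singular problem \eqref{neqn} (using the second limit in $(h_2)$, \cite{GU} and Lemma \ref{nlem}). You instead regularize the singularity, solve the approximating problems by Schauder, derive uniform $H_0^1$ and $L^\infty$ bounds from $h(t)=o(t^{\gamma+1})$, obtain a uniform interior lower bound by comparison with the solution of the constant--right-hand-side problem, and pass to the limit against compactly supported test functions; this is essentially the strategy the paper itself uses for the perturbed problem in Section 3 (Step 1 of the proof of Theorem \ref{pu2thm}), transplanted to the purely singular case. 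What each buys: your route avoids both the auxiliary problem \eqref{neqn} and the delicate variational-inequality argument of Lemma \ref{Subsuplemma}, and it only needs $h(0)>0$ rather than the first limit in $(h_2)$ (which is in any case implied by $(h_1)$); the paper's route avoids the fixed-point existence step and gets the $L^\infty$ bound for free from $\overline{u}\in L^\infty(\Omega)$. Two steps you state without proof deserve a line each if written up: the comparison $u_k\ge v$ requires the weak comparison principle for the mixed operator (nonnegativity of the nonlocal cross term, as in the argument via \cite{LL} used in Section 3 of the paper), and the uniform Stampacchia iteration must absorb the linear term $\e\,u_k$ on the right-hand side, which works only because $\e$ can be taken small independently of $k$. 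Neither is a gap, just routine detail.
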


\begin{Theorem}\label{pu2thm}
Let $0<\gamma<1$ and $g$ be of the form $(g_2)$. Then there exists $\Lambda>0$ such that for every $\lambda\in(0,\Lambda)$ the problem \eqref{maineqn} admits at least two different
weak solutions in $H_0^{1}(\Omega)$.
\end{Theorem}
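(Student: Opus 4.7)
The plan is to employ an approximation/mountain-pass scheme in the spirit of the classical Haitao argument, adapted here to the mixed operator $-\De+(-\De)^{s}$. For each $\e>0$, I would introduce the regularized equation
\[
-\De u+(-\De)^{s}u=\la(u^{+}+\e)^{-\ga}+(u^{+})^{q}\text{ in }\Om,\quad u=0\text{ in }\R^{n}\setminus\Om,
\]
whose associated energy functional
\[
J_{\e}(u)=\frac12\int_{\Om}|\na u|^{2}\,dx+\frac12\iint_{\R^{2n}}\frac{|u(x)-u(y)|^{2}}{|x-y|^{n+2s}}\,dx\,dy-\la\int_{\Om}F_{\e}(u^{+})\,dx-\frac{1}{q+1}\int_{\Om}(u^{+})^{q+1}\,dx,
\]
with $F_{\e}(t)=\int_{0}^{t}(\sigma+\e)^{-\ga}\,d\sigma$, is of class $C^{1}$ on $H_{0}^{1}(\Om)$ by Lemma \ref{locnon1} and Lemma \ref{emb}.

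Next I would establish a mountain-pass geometry for $J_{\e}$ that is uniform in $\e$, provided $\la$ is small. Since $1-\ga<2$ and $q+1<2^{*}$, the Sobolev embedding yields $\rho,\al>0$ and $\La>0$ such that $J_{\e}(u)\geq\al$ whenever $\|u\|=\rho$ and $\la\in(0,\La)$. Because $q>1$, one easily finds $e_{0}\in H_{0}^{1}(\Om)$ with $\|e_{0}\|>\rho$ and $J_{\e}(e_{0})<0$ uniformly in small $\e$. Moreover, a small multiple $t\varphi$ of a fixed positive $\varphi\in C_{c}^{1}(\Om)$ satisfies $J_{\e}(t\varphi)<0$ for $t>0$ small, uniformly in $\e$, because the singular energy $F_{\e}(t\varphi)$ is comparable to $t^{1-\ga}$ and $1-\ga<2$. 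Ekeland's variational principle applied on $\overline{B_{\rho}}$ therefore yields a first critical point $u_{1,\e}\in B_{\rho}$ with $J_{\e}(u_{1,\e})<0$, while the mountain-pass theorem produces a second critical point $u_{2,\e}$ at level $c_{\e}\geq\al>0$; the subcriticality $q<2^{*}-1$ secures the required Palais-Smale condition. Testing against $u_{i,\e}^{-}$ gives $u_{i,\e}\geq 0$.

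Then I would show that $\{u_{i,\e}\}_{\e>0}$ is bounded in $H_{0}^{1}(\Om)$ uniformly in $\e$: for $u_{1,\e}$ this is immediate, and for $u_{2,\e}$ it follows from an Ambrosetti--Rabinowitz-type identity combined with an $\e$-independent upper bound for $c_{\e}$ obtained by fixing the mountain-pass path once and for all. Extracting weak limits $u_{i}\in H_{0}^{1}(\Om)$, the delicate step is the passage to the limit in the singular term $\la(u_{i,\e}+\e)^{-\ga}$: this requires a uniform positive lower bound $u_{i,\e}\geq c(\omega)>0$ on every $\omega\Subset\Om$, which I would establish by comparison from below with the solution of the purely singular problem provided by Theorem \ref{mthm} (applied with $h\equiv 1$) together with a Hopf-type estimate near $\pa\Om$. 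With this lower bound, dominated convergence handles the singular term, while a Brezis--Lieb argument combined with the subcritical growth of $u^{q}$ upgrades weak to strong convergence in $H_{0}^{1}(\Om)$. Remark \ref{tstrmk} then extends the class of admissible test functions to all of $H_{0}^{1}(\Om)$.

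The main obstacle will be ensuring that the two limits $u_{1}$ and $u_{2}$ remain distinct. The quantitative separation $J_{\e}(u_{1,\e})<0<\al\leq J_{\e}(u_{2,\e})$ survives the limit only once one knows that the singular piece of the energy is continuous along the approximating sequences, which in turn hinges on the uniform positive lower bound on compact subsets. A secondary technical nuisance is the simultaneous handling of the local and nonlocal Dirichlet forms in the compactness and Palais-Smale arguments, but Lemma \ref{locnon1} absorbs the Gagliardo seminorm into the local $H_{0}^{1}$-norm, so that the standard variational toolbox carries over with only minor modifications.
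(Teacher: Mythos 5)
Your overall architecture coincides with the paper's: regularize with $\la(u^{+}+\e)^{-\ga}$, verify a mountain--pass geometry and the $(PS)_c$ condition uniformly for $\la\in(0,\La)$, extract a negative-energy local minimizer in a ball and a positive-energy mountain-pass point for each $\e$, prove nonnegativity and an $\e$-independent $H_0^1$-bound (the paper also bounds the mountain-pass level by $\max_{t\in[0,1]}I_{0,\e}(tTe_1)$ along a fixed path), obtain a uniform interior lower bound to pass to the limit in the singular term, and separate the two limits by showing $I_{\la,\e}(v_\e)\to I_\la(v_0)$ so that the gap $I_{\la,\e}(\nu_\e)<0<\rho\le I_{\la,\e}(\zeta_\e)$ survives. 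The only cosmetic differences are Ekeland versus direct minimization over $\overline{B_R}$, and Brezis--Lieb versus the paper's device of testing the $\e$-equation with $v_\e$ and the limit equation with $v_0$ to upgrade weak to strong convergence.

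There is, however, one step that would fail as you state it: the uniform lower bound $v_\e\ge c(\omega)>0$ on $\omega\Subset\Om$ cannot be obtained by comparing $v_\e$ from below with the solution $w$ of the purely singular problem $-\De w+(-\De)^s w=\la w^{-\ga}$. For that comparison you would need $v_\e$ to be a supersolution of the singular problem, i.e.\ $\la(v_\e+\e)^{-\ga}+v_\e^{q}\ge \la v_\e^{-\ga}$ (or $\ge \la w^{-\ga}$ on $\{v_\e<w\}$); this is false where $v_\e$ is small, since $(v_\e+\e)^{-\ga}$ stays bounded by $\e^{-\ga}$ while $v_\e^{-\ga}$ and $w^{-\ga}$ blow up near $\pa\Om$. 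The repair is elementary and is what the paper does: for $\e\in(0,1)$ one has the $\e$-independent bound $\la(t+\e)^{-\ga}+t^{q}\ge \min\{1,\la/2\}=:C>0$, so one compares $v_\e$ with the solution $\xi$ of $-\De\xi+(-\De)^s\xi=C$ by testing the difference of the two equations with $(\xi-v_\e)^{+}$ and using the sign of the nonlocal form as in \cite{LL}; this yields $v_\e\ge\xi\ge c(\omega)>0$ on every $\omega\Subset\Om$, uniformly in $\e$. Note also that no Hopf-type boundary estimate is needed (nor readily available for the mixed operator): interior positivity on compact subsets suffices both for the definition of weak solution and for passing to the limit against $\phi\in C_c^{1}(\Om)$.
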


To prove our main results stated above, the following result concerning the mixed local and nonlocal eigenvalue problem \eqref{evp} will be useful for us.
\begin{equation}\label{evp}
-\Delta u+(-\Delta)^s u=\lambda |u|^{p-2}u\text{ in }\Omega,\quad u=0\text{ in }\mathbb{R}^n\setminus \Omega.
\end{equation}

\begin{Lemma}\label{evpthm}
(i) There exists the least eigenvalue $\lambda_1>0$ and at least one corresponding eigenfunction $e_1\in H_0^{1}(\Omega)\cap L^\infty(\Omega)\setminus\{0\}$ which is nonnegative in $\Om$. (ii) Moreover, for every $\omega\Subset\Om$, there exists a positive constant $c(\omega)$ such that $e_1\geq c(\omega)>0$ in $\omega$. 
\end{Lemma}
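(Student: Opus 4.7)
The plan is to realise $\lambda_1$ as the infimum of the mixed Rayleigh quotient
$$
\lambda_1 \;=\; \inf_{u\in H_0^{1}(\Omega)\setminus\{0\}}\frac{\int_\Omega |\nabla u|^2\,dx+\iint_{\mathbb{R}^{2n}}\frac{|u(x)-u(y)|^2}{|x-y|^{n+2s}}\,dx\,dy}{\left(\int_\Omega |u|^p\,dx\right)^{2/p}}
$$
and to extract an eigenfunction by the direct method of the calculus of variations. Writing $J(u)$ for the numerator, Lemma \ref{locnon1} guarantees that $J$ is finite on $H_0^{1}(\Omega)$ and equivalent to $\|\nabla u\|_{L^2(\Omega)}^{2}$; combined with the subcritical Sobolev embedding of Lemma \ref{emb}, this forces $\lambda_1>0$ (for $p$ in the admissible subcritical range relevant to Theorems \ref{mthm}--\ref{pu2thm}).

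Next I would take a minimising sequence $\{u_k\}$ with $\|u_k\|_{L^p(\Omega)}=1$ and $J(u_k)\to\lambda_1$. Since $\{u_k\}$ is bounded in $H_0^{1}(\Omega)$, up to a subsequence $u_k\rightharpoonup e_1$ weakly in $H_0^{1}(\Omega)$, and the Rellich--Kondrachov compactness in Lemma \ref{emb} upgrades this to strong convergence in $L^p(\Omega)$; hence $\|e_1\|_{L^p(\Omega)}=1$, so $e_1\not\equiv 0$. Both pieces of $J$ are squares of Hilbert seminorms, hence weakly lower semicontinuous, so $J(e_1)\le\liminf_k J(u_k)=\lambda_1$, which together with the constraint makes $e_1$ a minimiser. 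The pointwise inequality $\bigl||e_1(x)|-|e_1(y)|\bigr|\le|e_1(x)-e_1(y)|$ gives $J(|e_1|)\le J(e_1)$, so I may replace $e_1$ by $|e_1|$ and assume $e_1\ge 0$ a.e.\ in $\Omega$. A standard Lagrange-multiplier computation on the $C^1$-manifold $\{\|u\|_{L^p(\Omega)}=1\}$ then shows that $e_1$ solves \eqref{evp} weakly with eigenvalue $\lambda_1$, and the minimality of $\lambda_1$ among such multipliers follows from the very construction.

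For the boundedness asserted in (i) I would invoke a Moser/De Giorgi iteration for the mixed local--nonlocal operator: since $\lambda_1 e_1^{p-1}$ is subcritical in $e_1$, the iteration schemes developed for the operator $-\Delta+(-\Delta)^s$ in \cite{Biagi1, BDVV2, Silva} apply and yield $e_1\in L^\infty(\Omega)$. For (ii), once $e_1\geq 0$, $e_1\not\equiv 0$ is known, I view $e_1$ as a nonnegative supersolution of $-\Delta w+(-\Delta)^s w=0$ and apply the strong minimum principle and weak Harnack inequality available for this mixed operator in the same framework; these together produce the positive lower bound $c(\omega)>0$ on every $\omega\Subset\Omega$.

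The variational step is entirely routine, so the real content lies in the qualitative information: the $L^\infty$ bound and, above all, the strict interior positivity with a constant depending only on the compact subset $\omega$. These rely on the Harnack-type theory for mixed local--nonlocal equations that has only recently been developed, and I would cite it rather than redo it. Beyond this, the only subtlety is ensuring that the ambient exponent $p$ in \eqref{evp} is subcritical (which is the regime in which Lemma \ref{evpthm} is actually used in Theorems \ref{mthm}--\ref{pu2thm}), so that the Rellich compactness used above is legitimate.
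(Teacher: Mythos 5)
Your proposal is correct and follows essentially the same route as the paper, which simply cites \cite[Prop 2.6 and Theorem 2.8]{BDVV2} for the variational existence of $(\lambda_1,e_1)$ with $e_1\in H_0^1(\Omega)\cap L^\infty(\Omega)$ nonnegative, and \cite[Theorem 8.4]{GK} for the interior positivity on compact subsets; your sketch of the Rayleigh-quotient minimisation, De Giorgi iteration, and strong minimum principle is precisely the content of those cited results.
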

\begin{proof}
Part $(i)$ follows from \cite[Prop 2.6 and Theorem 2.8]{BDVV2}. 
Part $(ii)$ follows from \cite[Theorem 8.4]{GK}.
\end{proof}

Singular problems has drawn a great attention over the last three decade. Equations of the form
\begin{equation}\label{ln}
-\alpha\Delta u+\beta(-\Delta)^s u=\lambda f(u)u^{-\gamma}+\mu u^r,
\end{equation}
where $\alpha,\beta,\lambda,\mu,r\geq 0,\,\gamma>0$ are parameters and $f$ is some given function, are studied widely in both the local ($\beta=0$) and nonlocal ($\alpha=0$) cases separately. Here the singularity is captured by the parameter $\gamma>0$. Indeed, the quasilinear analouge of the equation \eqref{ln} is also investigated in the separate local and nonlocal cases and there is a colossal amount of work done for such problems.

More precisely, in the local case ($\beta=0$), Crandall-Rabinowitz-Tartar \cite{CRT} proved existence of classical solution of \eqref{ln} for $\lambda=1,\mu=0$ and $f(u)=1$ for any $\gamma>0$. Further, for a certain range of $\gamma$, Lazer-McKenna \cite{LazerMc} studied the notion of weak solutions. Boccardo-Orsina \cite{Boc} removed this restriction on $\gamma$ and proved existence of weak solutions for any $\gamma>0$. This study has further been investigated in the quasilinear setting by Canino-Sciunzi-Trombetta \cite{Canino}, see also De Cave \cite{DeCave} and the references therein. When $f(u)\geq 0$ and $\mu=0$, for $0<\gamma<1$ and a certain range of $\lambda$, equation \eqref{ln} is investigated by Ko-Lee-Shivaji in \cite{Lee}. In the purturbed case, we refer to Haitao \cite{Haitao}, Hirano-Saccon-Shioji \cite{Shi}, Arcoya-Boccardo-M\'{e}rida \cite{Ar-Boc, Ar-Mer}, Bal-Garain \cite{BG}, Giacomoni-Schindler-Tak\'{a}\v{c} in \cite{GST},  and the references therein.

In the nonlocal case ($\alpha=0$), equation \eqref{ln} is studied by Fang \cite{Fang} for $\mu=0$ and further been extended in the quasilinear setting by Canino-Montoro-Sciunzi-Squassina \cite{Caninononloc}. The perturbed  singular case ($\mu>0$) is investigated by Barrios-De Bonis-Medina-Peral \cite{BDMP}, Adimurthi-Giacomoni-Santra \cite{Adi}, Giacomoni-Mukherjee-Sreenadh \cite{GMS, GMS2} and generalized by Mukherjee-Sreenadh \cite{MS} in the quasilinear case and the references therein.

To the best of our knowledge, singular problems in the mixed local and nonlocal setting is very less known. Our main purpose in this article is to contribute in this topic. We believe it would be an interesting topic of further investigation. We would like to mention that mixed problems are also less known even in the nonsingular case. Using probability theory, Foondun \cite{Fo}, Chen-Kim-Song-Vondra\v{c}ek \cite{CKSV} studied regularity results for the equation
\begin{equation}\label{mln}
-\Delta u+(-\Delta)^s u=0.
\end{equation}
Recently based on purely analytic approach, Biagi-Dipierro-Salort-Valdinoci-Vecchi \cite{Biagi2, BDVV, SV} studied existence and regularity results for the mixed equation \eqref{mln}. Equation \eqref{mln} is also studied using analytic approach in the quasilinear case by Garain-Kinnunen \cite{GK}. Several recent regularity results and other qualitative properties for such problems using analytic approach can be found in see \cite{BDVV2, Biagi1, BVDV, DeMin, GL} and the references therein.

In the mixed singular case, that is for positive $\alpha$ and $\beta$, assuming $\mu=0$ and $f$ depending on $x$ only, the singular equation \eqref{ln} and its quasilinear version is studied recently. In this concern, for the quasilinear case, we refer to Garain-Ukhlov \cite{GU} for existence, uniqueness, regularity and symmetry properties with any $\gamma>0$. Further, associated extremal functions are also studied in \cite{GU}. Moreover, Arora-Radulescu \cite{Aro-Rad} studied several existence and regularity properties (which shows power and exponential type Sobolev regularity depending
upon the summability of the datum $f$ and the singular exponent $\gamma>0$) for the semilinear equation \eqref{ln}, where the case $\gamma=0$ is also considered.

{In this article, we establish existence and multiplicity results for the mixed problem \eqref{maineqn} where the singularity $g$ is either of type $(g_1)$ or $(g_2)$. We would like to emphasis that our main results for the mixed case (Theorem \ref{mthm} and Theorem \ref{pu2thm}) are similar to the associated Laplace equation, see \cite{Ar-Boc, Gtmna}. Although it is worth to mention that the presence of the nonlocal operator in the mixed equation cannot be neglected and such nonlocal affect is one of the main obstacle, see \cite{BDVV}. To overcome this difficulty, we simultaneously employ the theory developed for the Laplacian and fractional Laplacian to study the mixed equation \eqref{maineqn}. Further, we will make use of some recent results for the mixed operator.

More precisely, the variational technique introduced for the local case in \cite{Lee} will be adopted to the mixed case for proving Theorem \ref{mthm}. To this end, we also borrow ideas from \cite{Haitao} to prove the sub-supersolution result (Lemma \ref{Subsuplemma}), where to deal with the nonlocal behavior of the equation, we used the technique from \cite{GMS2}. Finally, the eigenvalue problem \eqref{evp} and the purely singular problem \eqref{neqn} related to the mixed operator are used to construct subsolution and supersolutions, thanks to Lemma \ref{evpthm} and Lemma \ref{nlem}.

To prove Theorem \ref{pu2thm}, we utilise the variational approach introduced for the local case in Arcoya-Boccardo \cite{Ar-Boc} in combination with the technique from \cite{GMnonloc} to deal with the nonlocality. To this end, we obtain existence of multiple solutions of the associated approximate problem \eqref{apx}. This fact combined with an apriori estimate (Lemma \ref{apriori}) gives us the required result.}

\subsection{Notation and organization of the article}
Throughout the rest of the article, by $c$ or $C$, we mean a positive constant which may vary from line to line or even in the same line. The dependency of the constants $c$ or $C$ on the parameters $r_1,r_2,\ldots,r_k$ is denoted by $c(r_1,r_2,\ldots,r_k)$ or $C(r_1,r_2,\ldots,r_k)$. For $a\in\mathbb{R}$, we denote by $a^+=\max\{a,0\}$ and $a^-=\max\{-a,0\}$. We use the notation $2^*=\frac{2n}{n-2}$ if $n>2$ and $2^*=\infty$ if $n=2$.

In Section 2, we obtain some preliminary results and prove Theorem \ref{mthm}. Finally, in Section 3, we establish some useful results and prove Theorem \ref{pu2thm}.

\section{Preliminaries for the proof of Theorem \ref{mthm}}
Throughout this section, we assume $g$ is of the form $(g_1)$. First we obtain some useful results. Consider the energy functional $J_\la: H_0^{1}(\Omega) \to \mb R \cup \{\pm\infty\}$ defined by
\[J_\la(u)=\int_{\Om}G(x,\nabla u)+\iint_{\mathbb{R}^{2n}}F(x,y,u)\,dx dy-\la \int_{\Om}H(u)~dx  \]
where
$$
G(x,\nabla u)=\frac{1}{2}|\nabla u|^2\\
$$
$$
F(x,y,u)=\frac{|u(x)-u(y)|^2}{|x-y|^{n+2s}}
$$
and
\begin{equation*}
H(t)=\left\{
\begin{aligned}
&\int_0^t h(\tau)\tau^{-\gamma}~d\tau,\; \text{if}\; t>0,\\
&0, \; \text{if}\; t\leq 0.
\end{aligned}\right.
\end{equation*}
Following Haitao \cite{Haitao}, we establish the following result in the mixed local and nonlocal setting.
\begin{Lemma}\label{Subsuplemma}
Suppose that $\underline{u}, \overline{u} \in H_0^{1}(\Omega) \cap  L^\infty(\Om)$ are weak subsolution and supersolution of \eqref{maineqn} respectively such that $0<\underline{u} \leq \overline{u}$ in $\Omega$ and $\uline{u}\geq c(\omega)>0$ for every $\omega \Subset  \Om$, for some constant $c(\omega)$. Then there exists a weak solution $u\in H_0^{1}(\Omega) \cap L^\infty(\Om)$ of \eqref{maineqn} satisfying $\uline{u}\leq u\leq \oline{u}$ in $\Om$.
\end{Lemma}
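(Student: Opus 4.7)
The approach is to adapt Haitao's truncation scheme \cite{Haitao} to the mixed setting, using the nonlocal manipulation technique of \cite{GMS2}. Introduce the pointwise truncation $T(x,t) = \min\{\max\{t,\uline{u}(x)\},\oline{u}(x)\}$, the truncated nonlinearity $\tilde g(x,t) = \la\, h(T(x,t))\, T(x,t)^{-\ga}$, and its primitive $\tilde H(x,t)=\int_0^t \tilde g(x,\tau)\,d\tau$. Form the truncated functional $\tilde J_\la$ on $H_0^1(\Om)$ by replacing $\la H(v)$ in $J_\la$ with $\tilde H(x,v)$. The upper bound $\tilde g(x,t) \le C\,\uline{u}(x)^{-\ga}$ (valid because $h$ is continuous and $\oline{u}\in L^\infty(\Om)$), together with $\ga\in(0,1)$ and the local positivity $\uline u \ge c(\omega)>0$ on $\omega \Subset \Om$, guarantees that $\tilde J_\la$ is well defined, bounded below and coercive on $H_0^1(\Om)$.

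The first step is to minimize $\tilde J_\la$. By the convexity of the mixed Dirichlet form, together with Lemma \ref{locnon1} and the compact embeddings in Lemma \ref{emb}, $\tilde J_\la$ is weakly lower semicontinuous, so the direct method yields a minimizer $u\in H_0^1(\Om)$. Since $\tilde g(x,\cdot)$ is locally Lipschitz away from the set $\{\uline u(x)=0\}$, which does not meet any compact subset of $\Om$, $u$ satisfies the Euler--Lagrange equation
\[
\int_\Om \na u\cdot\na\phi\,dx + \iint_{\mb R^{2n}}\frac{(u(x)-u(y))(\phi(x)-\phi(y))}{|x-y|^{n+2s}}\,dx\,dy = \int_\Om \tilde g(x,u)\,\phi\,dx
\]
for every $\phi \in H_0^1(\Om)$.

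The heart of the proof is the confinement $\uline u\le u\le \oline u$ a.e.\ in $\Om$. To obtain $u\le\oline u$, I would plug $\phi=(u-\oline u)^+ \in H_0^1(\Om)$ into the Euler--Lagrange equation and into the supersolution inequality for $\oline u$ (the latter extended from $C_c^1$ to admissible $H_0^1$ test functions by density), and subtract. Writing $w=u-\oline u$, this gives
\[
\int_\Om|\na w^+|^2\,dx + \iint_{\mb R^{2n}}\frac{(w(x)-w(y))(w^+(x)-w^+(y))}{|x-y|^{n+2s}}\,dx\,dy \le \int_\Om\bigl[\tilde g(x,u)-g(x,\oline u)\bigr]w^+\,dx.
\]
On $\{u>\oline u\}$ the truncation gives $\tilde g(x,u)=g(x,\oline u)$, so the right-hand side vanishes, and the pointwise inequality $(a-b)(a^+-b^+)\ge(a^+-b^+)^2$ applied under the nonlocal integral bounds the left-hand side from below by $\int|\na w^+|^2\,dx + \iint|w^+(x)-w^+(y)|^2/|x-y|^{n+2s}\,dx\,dy$, forcing $w^+\equiv 0$. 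The analogous comparison with $\phi=(\uline u-u)^+$ yields $u\ge\uline u$; admissibility of this test function in the subsolution inequality follows by approximation, since $g(x,\uline u)(\uline u-u)^+ \le C\,\uline u^{1-\ga}\in L^\infty(\Om)$ thanks to $\ga<1$ and $\uline u\in L^\infty(\Om)$.

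Once $\uline u\le u\le \oline u$ is established, $\tilde g(x,u)=g(x,u)$, so $u$ is a weak solution of \eqref{maineqn}; the lower bound $u\ge c(\omega)>0$ on $\omega\Subset\Om$ is inherited from $\uline u$ and the $L^\infty$ bound from $\oline u$. The principal obstacle is the joint handling of the singular weight and the nonlocal kernel: the weight $\uline u^{-\ga}$ can blow up toward $\partial\Om$, so $\ga<1$ together with the compact-set lower bound on $\uline u$ is essential to give all integrals a meaning, while the nonlocal comparison step works only thanks to the algebraic inequality $(a-b)(a^+-b^+)\ge(a^+-b^+)^2$; once these two ingredients are in place, the rest of the argument is routine.
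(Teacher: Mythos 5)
Your architecture is genuinely different from the paper's. The paper follows Haitao's constrained-minimization scheme: it minimizes $J_\la$ over the order interval $S=\{v\in H_0^{1}(\Omega):\uline{u}\leq v\leq\oline{u}\}$ and then extracts the equation from the variational inequality $0\leq\lim_{t\to0}t^{-1}(J_\la(u+t(\eta_\epsilon-u))-J_\la(u))$ with $\eta_\epsilon=\min\{\max\{u+\epsilon\phi,\uline{u}\},\oline{u}\}$. You instead truncate the nonlinearity, minimize the truncated functional over all of $H_0^{1}(\Omega)$, and recover the barriers by comparison. Both are classical, but the paper's choice is not cosmetic: on $S$ one has the uniform bound $H(v)\leq\frac{h(\|\oline{u}\|_\infty)}{1-\ga}\|\oline{u}\|_\infty^{1-\ga}$, so lower semicontinuity is immediate, and the auxiliary functions $\phi^{\epsilon}=(u+\epsilon\phi-\oline{u})^{+}$, $\phi_{\epsilon}=(u+\epsilon\phi-\uline{u})^{-}$ are automatically supported inside $\supp\phi\Subset\Om$, so the singular weight is only ever evaluated where $\uline{u}\geq c(\omega)>0$.

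Your version has a genuine gap exactly where you discard this safeguard. You justify well-definedness, coercivity and differentiability of $\tilde J_\la$ on all of $H_0^{1}(\Om)$ by the bound $\tilde g(x,t)\leq C\,\uline{u}(x)^{-\ga}$ together with $\uline{u}\geq c(\omega)$ on compact subsets. But the hypotheses of Lemma \ref{Subsuplemma} say nothing about the rate at which $\uline{u}$ vanishes at $\partial\Om$; interior positivity gives no control of $\int_\Om\uline{u}^{-\ga}|v|\,dx$, and $\uline{u}^{-\ga}$ need not even lie in $L^1(\Om)$. The same issue recurs in the assertion $g(x,\uline u)(\uline u-u)^{+}\leq C\uline u^{1-\ga}$ (false where $u<0$, since $(\uline u-u)^{+}$ then exceeds $\uline u$), in the differentiation of $\int_\Om\tilde H(x,u+s\phi)\,dx$ for non-compactly supported $\phi$, and in the use of $(u-\oline u)^{+}$ and $(\uline u-u)^{+}$ as test functions in \eqref{wksoleqn}, which Definition \ref{wksoldef} only grants for $\phi\in C_c^{1}(\Om)$ (Remark \ref{tstrmk} extends this for solutions, not for sub/supersolutions). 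These holes are likely repairable — e.g.\ the supersolution inequality itself yields $\int_\Om g(x,\oline u)\phi\,dx\leq C\|\phi\|$ for nonnegative $\phi$, since $h(\oline u)\geq h(0)>0$, and this is the bound that actually controls the linear-growth part of $\tilde H$ and legitimises the comparison test functions via Fatou — but none of this is in your argument, and the reason you give is not the right one. The comparison inequality $(a-b)(a^{+}-b^{+})\geq(a^{+}-b^{+})^{2}$ and the final confinement step are fine.
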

\begin{proof}
Let us consider the set
$$
S=\{v\in H_0^{1}(\Omega):\uline{u}\leq v\leq \oline{u}\text{ in }\Om\}.
$$
Since $\uline{u}\leq \oline{u}$ in $\Om$, we have $S\neq \emptyset$. We observe that $S$ is closed and convex. We establish the result in the following two Steps. \\
\textbf{Step $1$:} We claim that $J_\la$ admits a minimizer $u$ over $S$.\\
To this end, we prove that $J_\la$ is weakly sequentially lower semicontinuous over $S$. Indeed, let $\{v_k\}_{k\in\mathbb{N}} \subset S$ be such that $v_k \rightharpoonup v$ weakly in $H_0^{1}(\Omega)$. Then by the hypothesis on $h$, we have
\[H(v_k) \leq \int_0^{\oline{u}} h(\tau)\tau^{-\gamma}~d\tau \leq \frac{h(\|\oline{u}\|_\infty)}{(1-\gamma)}\|\oline{u}\|_\infty^{1-\gamma}.\]
Therefore by the Lebesgue's Dominated Convergence theorem and weak lower semicontinuity of norm, the claim follows. Hence, there exists a minimizer $u \in S$ of $J_\la$ that is $J_\la(u)= \inf\limits_{v\in S}J_\la(v)$.\\
\textbf{Step $2$:} Here, we prove that $u$ is a weak solution of \eqref{maineqn}.\\
Let $\phi \in C_c^{1}(\Om)$ and $\epsilon>0$. We define
\begin{equation*}
\eta_{\epsilon}=\left\{
\begin{aligned}
&\oline{u} \;\;\;\text{if} \; u+\epsilon \phi \geq \oline{u},\\
&u+\epsilon \phi  \;\;\;\text{if}\; \uline{u}\leq u+\epsilon \phi \leq \oline{u},\\
&\uline{u} \;\;\;\text{if} \; u+\epsilon \phi \leq \uline{u}.
\end{aligned}\right.
\end{equation*}
Observe that $\eta_{\epsilon}=u+\epsilon\phi-\phi^{\epsilon}+\phi_{\epsilon}\in S$, where $\phi^{\epsilon}= (u+\epsilon \phi -\oline{u})^+$ and $\phi_{\epsilon}= (u+\epsilon \phi -\uline{u})^-$. By Step $1$ above, since $u$ is a minimizer of $J_\la$, we have
\begin{equation}\label{eq1}
\begin{split}
0 & \leq \lim_{t \to 0} \frac{J_\la(u+t(\eta_{\epsilon}-u))- J_\la(u)}{t}= I_1+I_2-\la J \;\text{(say)},
\end{split}
\end{equation}
with
 \[I_1= \int_\Om\nabla u\nabla (\eta_\epsilon -u)~dx,\]
 \[I_{2}=\int_{Q}(\eta_{\epsilon}-u)(-\Delta)^s u\,dx,\]
 \[
  J=\int_{\Om}{(\eta_\epsilon -u){u^{-\gamma}}h(u)}\,dx,
\]
where we have used the notation $Q=\mathbb{R}^{2n}\setminus(\mathcal{C}\Om\times\mathcal{C}\Om)$, where $\mathcal{C}\Om:=\mathbb{R}^n\setminus\Om$.
Therefore, we have
\begin{equation}\label{eq2}
\begin{split}
&0\leq  \int_\Om \nabla u\nabla(\eta_{\epsilon}-u)~dx+\int_{Q}(\eta_{\epsilon}-u)(-\Delta)^su\,dx - \la \int_\Om {(\eta_\epsilon -u){u^{-\gamma}}h(u)}~dx\\
& \implies \frac{1}{\epsilon}(Q^\epsilon -Q_\epsilon)\leq \int_\Om\nabla u\nabla \phi~dx+\int_{\mathbb{R}^n}\phi(-\Delta)^s u\,dx - \la \int_\Om {{u^{-\gamma}}h(u)}\phi~dx
\end{split}
\end{equation}
where
\[Q^\epsilon= \int_\Om\nabla u\nabla\phi^{\epsilon}~dx+\int_{\mathbb{R}^n}\phi^{\epsilon}(-\Delta)^s u\,dx-\la \int_\Om {{u^{-\gamma}}h(u)}\phi^{\epsilon}~dx \]
\[\text{and}\quad\;Q_\epsilon= \int_\Om\nabla u\nabla\phi_{\epsilon}~dx+\int_{\mathbb{R}^n}\phi_{\epsilon}(-\Delta)^s u\,dx-\la \int_\Om {{u^{-\gamma}}h(u)}\phi_{\epsilon}~dx .\]

\textbf{Estimate of $Q^{\epsilon}$:} We observe that
\begin{equation}\label{l1}
\begin{split}
\frac{1}{\epsilon}\int_\Om\nabla u\nabla\phi^{\epsilon}~dx&=\frac{1}{\epsilon}\int_{\Omega}\nabla(u-\overline{u})\nabla\phi^{\epsilon}\,dx\geq \int_{\Omega^{\epsilon}}\nabla(u-\overline{u})\nabla\phi\,dx+\frac{1}{\epsilon}\int_{\Omega}\nabla\overline{u}\nabla\phi^{\epsilon}\,dx\Big)\\
&\geq o(1)+\frac{1}{\epsilon}\int_{\Omega}\nabla\overline{u}\nabla\phi^{\epsilon}\,dx.
\end{split}
\end{equation}

Further, we notice that
\begin{equation}\label{nl1}
\begin{split}
\frac{1}{\epsilon}\int_{\mathbb{R}^n}\phi^{\epsilon}(-\Delta)^s u\,dx&=\frac{1}{\epsilon}\Big(\int_{\mathbb{R}^n}\phi^{\epsilon}(-\Delta)^s (u-\overline{u})\,dx+\int_{\mathbb{R}^n}\phi^{\epsilon}(-\Delta)^s \overline{u}\,dx\Big)\\
&\geq o(1)+\frac{1}{\epsilon}\int_{\mathbb{R}^n}\phi^{\epsilon}(-\Delta)^s \overline{u}\,dx,
\end{split}
\end{equation}
where to estimate the last inequality, we used the the lines of the proof from \cite[Page 9]{GMS2}. Combining \eqref{l1} and \eqref{nl1}, we have
\begin{equation}\label{eps1}
\begin{split}
\frac{1}{\epsilon}Q^{\epsilon}&\geq o(1)+\frac{1}{\epsilon}\Big(\int_{\Omega}\nabla\overline{u}\nabla\phi^{\epsilon}\,dx+\int_{\mathbb{R}^n}\phi^{\epsilon}(-\Delta)^s \overline{u}\,dx-\la \int_\Om {{u^{-\gamma}}h(u)}\phi^{\epsilon}~dx\Big)\\
&=o(1)+\frac{1}{\epsilon}\Big(\int_{\Omega}\nabla\overline{u}\nabla\phi^{\epsilon}\,dx+\int_{\mathbb{R}^n}\phi^{\epsilon}(-\Delta)^s \overline{u}\,dx-\la \int_\Om {{\overline{u}^{-\gamma}}h(\overline{u})}\phi^{\epsilon}~dx\Big)\\
&\quad+\frac{\lambda}{\epsilon}\Big(\int_\Om {{\overline{u}^{-\gamma}}h(\overline{u})}\phi^{\epsilon}~dx-\int_\Om {{{u}^{-\gamma}}h({u})}\phi^{\epsilon}~dx\Big)\\
&\geq o(1)+\frac{\la}{\epsilon}\int_{\Om^{\epsilon}}h(u)(\oline{u}^{-\gamma}-u^{-\gamma})(u-\oline{u})\,dx\\
&\quad \quad+\la\int_{\Om^{\epsilon}}h(u)(\oline{u}^{-\gamma}-u^{-\gamma})\phi\,dx\\
&\geq o(1),
\end{split}
\end{equation}
using that $\oline{u}$ is a weak supersolution of \eqref{maineqn}, $u\leq\oline{u}$ and $\displaystyle\int_{\Om^{\epsilon}}h(u)(\oline{u}^{-\gamma}-u^{-\gamma})\phi\,dx\leq{2{c(\omega)^{-\gamma}}h(||\oline{u}||_{\infty})}||\phi||_{\infty}<+\infty$, where $\Om^{\epsilon}= \text{supp}\;\phi^{\epsilon}$ and $\omega=\mathrm{supp}\,\phi$.

Taking into account that $\uline{u}$ is a weak subsolution of \eqref{maineqn}, $u \geq \uline{u}$ and $\displaystyle \int_{\Om_{\epsilon}}h(u)(\uline{u}^{-\gamma}-u^{-\gamma})\phi\,dx\leq {2c(\omega)^{-\gamma} h(\|\oline{u}\|_\infty)}\|\phi\|_\infty<+\infty$, where $\Om_\epsilon=\mathrm{supp}\,\phi_\epsilon$ and $\omega=\mathrm{supp}\,\phi$, in a similar way, we obtain
\begin{equation}\label{eps2}
\frac{1}{\epsilon}Q_\epsilon\leq o(1).
\end{equation}

Using the estimates \eqref{eps1} and \eqref{eps2} in \eqref{eq2}, we conclude that
\[0 \leq\int_\Om\nabla u\nabla \phi~dx+\int_{\mathbb{R}^n}\phi(-\Delta)^s u\,dx - \la \int_\Om {{u^{-\gamma}}h(u)}\phi~dx.\]
Since $\phi\in C_c^{1}(\Om)$ is arbitrary, our claim follows. This completes the proof.
\end{proof}

\begin{Lemma}\label{nlem}
Let $0<\gamma<1$ and $v_0\in H_0^{1}(\Om)$ be a weak solution of the problem 
\begin{equation}\label{neqn}
-\Delta u+(-\Delta)^s u=u^{-\gamma}\text{ in }\Om,\quad u>0\text{ in }\Om,\quad u=0\text{ in }\mathbb{R}^n\setminus\Om.
\end{equation}
Then $v_0\in L^\infty(\Om)$.
\end{Lemma}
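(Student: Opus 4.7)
The plan is to prove the $L^\infty$ bound via a Stampacchia truncation argument tailored to the mixed operator. The crucial observation is that, since $0<\gamma<1$, on the superlevel set $\{v_0>k\}$ with $k\geq 1$ one has $v_0^{-\gamma}\leq k^{-\gamma}\leq 1$, so the right-hand side is harmless at large levels and the singularity only affects a neighbourhood of the zero set of $v_0$, which plays no role in an upper bound.

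First, I would fix $k\geq 1$ and use $\phi=(v_0-k)_+$ as a test function in the weak formulation of \eqref{neqn}. This is admissible because $(v_0-k)_+\in H_0^1(\Om)$, and the nonlinearity $u^{-\gamma}$ is of the form $(g_1)$ with $\la=1$ and $h\equiv 1$ (both $(h_1)$ and $(h_2)$ clearly hold for $h\equiv 1$), so Remark \ref{tstrmk} legitimises this choice. Writing $A_k=\{x\in\Om\colon v_0(x)>k\}$, the local part contributes exactly $\int_\Om|\na(v_0-k)_+|^2\,dx$. For the nonlocal part, the monotonicity of $t\mapsto(t-k)_+$ yields
\[
(v_0(x)-v_0(y))\bigl((v_0(x)-k)_+-(v_0(y)-k)_+\bigr)\geq 0
\]
pointwise on $\mathbb{R}^{2n}$, so the entire double integral is nonnegative and can be discarded. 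The right-hand side is bounded by $\int_{A_k}(v_0-k)_+\,dx$.

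Next, I would combine H\"older's inequality with the Sobolev embedding of Lemma \ref{emb}: for some $r>2$ (take $r=2^{*}$ when $n>2$ and any fixed $r>2$ when $n=2$),
\[
\int_{A_k}(v_0-k)_+\,dx\leq |A_k|^{1-\frac{1}{r}}\|(v_0-k)_+\|_{L^r(\Om)}\leq C|A_k|^{1-\frac{1}{r}}\|\na(v_0-k)_+\|_{L^2(\Om)}.
\]
Absorbing one power of the gradient norm into the left-hand side yields $\|\na(v_0-k)_+\|_{L^2(\Om)}\leq C|A_k|^{1-1/r}$. For $h>k\geq 1$, Chebyshev's inequality applied via $(v_0-k)_+\geq h-k$ on $A_h$ gives
\[
(h-k)|A_h|^{1/r}\leq\|(v_0-k)_+\|_{L^r(\Om)}\leq C\|\na(v_0-k)_+\|_{L^2(\Om)}\leq C|A_k|^{1-\frac{1}{r}},
\]
whence
\[
|A_h|\leq\frac{C^{r}}{(h-k)^{r}}\,|A_k|^{r-1}.
\]
Since $r-1>1$, a standard Stampacchia iteration lemma produces a finite level $K\geq 1$ with $|A_K|=0$, giving $v_0\leq K$ a.e.\ in $\Om$.

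The only genuinely nonlocal ingredient needed is the sign of the double integral under the truncation $(v_0-k)_+$, which is provided at no cost by the monotonicity identity above. The singularity of the right-hand side, which is the main nuisance in the existence theory, becomes harmless here thanks to the choice $k\geq 1$, so the main work reduces to a clean Stampacchia iteration carried out uniformly in $\gamma\in(0,1)$.
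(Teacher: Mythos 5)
Your proposal is correct and follows essentially the same route as the paper: test with $(v_0-k)_+$ for $k\geq 1$, discard the nonnegative nonlocal double integral, use that $v_0^{-\gamma}\leq 1$ on the superlevel set to tame the singularity, and close with H\"older, the Sobolev embedding, and the standard Stampacchia iteration lemma. The only cosmetic difference is that you absorb the gradient term by dividing rather than by Young's inequality with $\epsilon$, and you make explicit the monotonicity argument for the sign of the nonlocal term, which the paper leaves implicit.
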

\begin{proof}
Let $k>1$, then by Remark \ref{tstrmk} we choose $\phi_k = (v_0-k)^+ \in H_0^{1}(\Om)$ as a test function in (\ref{neqn}) and apply H$\ddot{\text{o}}$lder's along with Young's inequality with $\epsilon\in(0,1)$ to get
\begin{align*}
\int_{\Omega}|\nabla\phi_k|^2\,dx\leq\,C(\epsilon)|A(k)|^\frac{2}{q^{'}} + \epsilon\int_{\Omega}|\nabla\phi_k|^2\,dx,
\end{align*}
where $A(k)=\big\{x\in\Omega:v_0\geq k \text{ in }\Omega\big\}$. In the above estimate, we have also used that $H_0^{1}(\Om)\hookrightarrow L^q(\Om)$ for some $q>2$ from Lemma \ref{emb}. Therefore, fixing $\epsilon\in(0,1)$, we obtain
$$
\int_{\Omega}|\nabla\phi_k|^2\,dx \leq C|A(k)|^\frac{2}{q^{'}},
$$
where $C$ is some positive constant. Let $1 < k < h$, then since $A(h)\subset A(k)$, we have
\begin{align*}
(h-k)^p|A(h)|^\frac{2}{q}
\leq\Big(\int_{A(h)}(v_0-k)^{q}\,dx\Big)^\frac{2}{q}&\leq\Big(\int_{A(k)}(v_0-k)^{q}\,dx\Big)^\frac{2}{q}\\
&\leq C\int_{\Omega}|\nabla\phi_k|^2\,dx\leq C\,|A(k)|^\frac{2}{q^{'}}.
\end{align*}
Therefore
$$
|A(h)| \leq\frac{C}{(h-k)^q}|A(k)|^{q-1}.
$$
Since $q>2$, by \cite[Lemma B.1]{Stam}, we have  
$
||v_0||_{L^\infty(\Omega)} \leq c,
$
where $c$ is a positive constant. Hence the result follows.
\end{proof}

\subsection{Proof of Theorem \ref{mthm}}
We construct a pair of weak subsolution and supersolution of \eqref{maineqn} according to Lemma \ref{Subsuplemma}. By Lemma \ref{evpthm}, there exists $e_1\in H_0^{1}(\Omega)\cap L^\infty(\Omega)$ such that
\begin{equation}\label{aevp}
-\Delta e_1+(-\Delta)^s e_1=\la_1 e_1 \; \text{in}\; \Om, \;\;e_1>0\text{ in }\Om,\;\; e_1=0\;\text{in}\; \mathbb{R}^n\setminus\Om
\end{equation}
and for every $\omega\Subset\Omega$, there exists a positive constant $c(\omega)$ with $e_1\geq c(\omega)$ in $\omega$. {By $(h_2)$, we know that $\lim\limits_{t\to 0} t^{-\gamma}{h(t)}=\infty$, so we can choose $a_\la>0$ sufficiently small such that
\begin{equation}\label{ala}
\la_1 (a_\la e_1) \leq \la {(a_\la e_1)^{-\gamma}}{h(a_\la e_1)}.
\end{equation}
Let $\uline{u}=a_{\la}e_1$, then $\uline{u}\in H_0^{1}(\Omega)\cap L^\infty(\Omega)$ and by \eqref{aevp} and \eqref{ala}, we get
\begin{equation}\label{sub}
-\Delta {\uline{u}}+(-\Delta)^s {\uline{u}} \leq \la {(a_\la e_1)^{-\gamma}}{h(a_\la e_1)}=\la {\uline{u}^{-\gamma}}{h(\uline{u})}\;\text{in}\; \Om.
\end{equation}
By \cite[Theorem 2.13]{GU} and Lemma \ref{nlem}, there exists $v_0\in H_0^{1}(\Omega)\cap L^\infty(\Om)$ such that for every $\omega\Subset\Omega$ there exists a positive constant $c(\omega)$ satisfying $v_0\geq c(\omega)>0$ in $\omega$ and
\begin{equation}\label{fs}
-\Delta v_0+(-\Delta)^s v_0 = v_0^{-\gamma}\text{ in } \Om,\; v_0>0 \; \text{in}\; \Om,\; v_0=0\;\text{in}\; \mathbb{R}^n\setminus\Om.
\end{equation}
By the hypothesis $(h_2)$, since $\lim\limits_{t\to \infty}t^{-(\gamma+1)}{h(t)}=0$, we choose $b_\la>0$ sufficiently large such that
\begin{equation}\label{bla}
{(b_\la \|v_0\|_\infty)^{-(\gamma+1)}}{h({b_\la \|v_0\|_\infty)}}\leq \frac{1}{\la \|v_0\|^{\gamma+1}_\infty}.
\end{equation}
We define $\oline{u}:= b_\la v_0$. Then $\oline{u}\in H_0^{1}(\Omega)\cap L^\infty(\Omega)$ and using \eqref{fs} and \eqref{bla}, we have
\begin{equation}\label{sup}
-\Delta {\oline{u}} +(-\Delta)^s {\oline{u}} = v_0^{-\gamma}{b_\la}  \geq \la{(b_\la v_0)^{-\gamma}}{h({b_\la \|v_0\|_\infty)}} \geq \la{\oline{u}^{-\gamma}} {h(\oline{u})}\; \text{in}\; \Om,
\end{equation}
where we have also used the nondecreasing property of $h$ from $(h_1)$.
Thus, from \eqref{sub} and \eqref{sup}, it follows that $\uline{u}$ and $\oline{u}$ are weak subsolution and supersolution of \eqref{maineqn} respectively and the constants $a_\la, b_\la$ can be chosen in such a way that $\uline{u}\leq \oline{u}$. Therefore, by Lemma \ref{Subsuplemma}, the result follows.

\section{Preliminaries for the proof of Theorem \ref{pu2thm}}
In this section, we consider the equation \eqref{maineqn} when $g$ is of the form $(g_2)$, which reads as
\begin{equation}\label{psp}
\begin{split}
-\Delta u+(-\Delta)^s u&=\lambda u^{-\gamma}+u^q\text{ in }\Omega,\quad u>0\text{ in }\Omega,\quad u=0\text{ in }\mathbb{R}^n\setminus\Omega,
\end{split}
\end{equation}
where $\lambda>0$, $0<\gamma<1$ and  $q\in(1,2^*-1)$ where $2^*=\frac{2n}{n-2}$ if $n>2$ and $2^*=\infty$ if $n=2$. To this end, we study the functional $I_\la: H_0^{1}(\Omega)\to\mb R\cup\{\pm \infty\}$ associated with the problem \eqref{psp} given by
\begin{equation}\label{fnl1}
I_\la(u) :=\frac{1}{2}\int_{\Omega}|\nabla u|^2\,dx +\frac{1}{2}\iint_{\mathbb{R}^{2n}}\frac{|u(x)-u(y)|^2}{|x-y|^{n+2s}}\,dx dy-\la \int_\Om \frac{(u^+)^{1-\gamma}}{1-\gamma}~dx -\frac{1}{q+1}\int_\Om (u^+)^{q+1}~dx.
\end{equation}
For $\epsilon>0$, we consider the approximated problem
\begin{equation}\label{apx}
\begin{aligned}
  -\Delta u+(-\Delta)^s u &=\la(u^{+} +\epsilon)^{-\gamma}+ (u^+)^q\;\text{in}\; \Om,\quad u=0 \; \text{ in }\; \mathbb{R}^n\setminus\Om.
\end{aligned}
\end{equation}
We remark that the energy functional associated with the problem \eqref{apx} is given by
\begin{equation}\label{fnl2}
\begin{split}
I_{\la,\epsilon}(u) &= \frac{1}{2}\int_{\Omega}|\nabla u|^2\,dx +\frac{1}{2}\iint_{\mathbb{R}^{2n}}\frac{|u(x)-u(y)|^2}{|x-y|^{n+2s}}\,dx dy -\la\int_\Om \frac{[(u^+ +\epsilon)^{1-\gamma}-\epsilon^{1-\gamma}]}{1-\gamma}~dx\\
&\quad\quad\quad\quad\quad\quad\quad\quad\quad\quad\quad\quad\quad\quad\quad\quad\quad\quad\quad\quad-\frac{1}{q+1}\int_\Om (u^+)^{q+1}~dx.
\end{split}
\end{equation}
We observe that $I_{\la,\epsilon}\in C^1\big(H_0^{1}(\Omega),\mb R\big)$, $I_{\la,\epsilon}(0)=0$ and $I_{\la,\epsilon}(v)\leq I_{0,\epsilon}(v)$, for all $ v \in H_0^{1}(\Omega)$. Let us define
\begin{equation}\label{l}
l=
\begin{cases}
2^{*}=\frac{2n}{n-2},\text{ if } n>2,\\
r,\text{ if }n=2,
\end{cases}
\end{equation}
where $r>1$ is such that $1<q<r-1$ if $n=2$. Next we prove that $I_{\la,\epsilon}$ satisfies the Mountain Pass Geometry.

\begin{Lemma}\label{MP-geo}
There exists $R>0,\,\rho>0$ and $\Lambda>0$ depending on $R$ such that
$$
\inf\limits_{\|v\|\leq R}I_{\la,\epsilon}(v)<0\;\text{and}\;
\inf\limits_{\|v\|=R}I_{\la,\epsilon}(v)\geq \rho,\text{ for }\la\in(0,\Lambda).
$$
Moreover, there exists $T>R$ such that
$
I_{\la,\epsilon}(Te_{1})<-1$ for $\la\in (0,\La)$, where $e_1$ is given by Lemma \ref{evpthm}.
\end{Lemma}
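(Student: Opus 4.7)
The plan is to verify the three required inequalities separately, in an order that makes the dependency of the constants transparent: fix $R$ first, then $\rho$ and $\Lambda$ depending on $R$, and finally choose $T$. The core ingredients are Lemma \ref{locnon1}, which controls the nonlocal Gagliardo double integral by $C\|u\|^2$; Lemma \ref{emb}, which embeds $H^{1}_0(\Om)$ continuously into $L^t(\Om)$ for every $t \in [1,l]$; Lemma \ref{evpthm}, which furnishes a nonnegative first eigenfunction $e_1 \in H^{1}_0(\Om) \cap L^\infty(\Om) \setminus \{0\}$; and the elementary subadditivity inequality $(a+b)^{1-\gamma} \le a^{1-\gamma} + b^{1-\gamma}$, valid for $a,b \ge 0$ because $1-\gamma \in (0,1)$.

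\emph{Sphere condition.} I would first discard the nonnegative nonlocal term $\frac12 \iint_{\R^{2n}}\frac{|u(x)-u(y)|^2}{|x-y|^{n+2s}}\,dx\,dy$ from $I_{\la,\epsilon}(u)$; then apply subadditivity to bound $(u^{+}+\epsilon)^{1-\gamma} - \epsilon^{1-\gamma} \le (u^{+})^{1-\gamma}$; and finally use H\"older's inequality together with Lemma \ref{emb} to obtain $\int_{\Om}(u^+)^{1-\gamma}\,dx \le C_1 \|u\|^{1-\gamma}$ and $\int_{\Om}(u^+)^{q+1}\,dx \le C_2 \|u\|^{q+1}$, the latter using $q+1 < l$. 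Combining,
\[
I_{\la,\epsilon}(u) \ge \tfrac{1}{2}\|u\|^2 - \tfrac{C_2}{q+1}\|u\|^{q+1} - \tfrac{C_1 \la}{1-\gamma}\|u\|^{1-\gamma}.
\]
Since $q+1 > 2$, the auxiliary function $t \mapsto \tfrac{1}{2}t^2 - \tfrac{C_2}{q+1}t^{q+1}$ attains a strictly positive maximum value $M$ at some $R > 0$; I fix this $R$ and set $\Lambda := \tfrac{(1-\gamma) M}{2 C_1 R^{1-\gamma}}$. Then for every $\la \in (0,\Lambda)$ and every $u$ with $\|u\|=R$, this yields $I_{\la,\epsilon}(u) \ge M/2 =: \rho > 0$. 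Note that $R$, $\rho$, $\Lambda$ are independent of $\epsilon$.

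\emph{Interior condition.} I would test on $v = te_1$ for small $t>0$ with $t\|e_1\| < R$. Since $e_1 \ge 0$ and $e_1 \in L^\infty(\Om)$, the mean value theorem applied to $s \mapsto s^{1-\gamma}$ gives $(te_1 + \epsilon)^{1-\gamma} - \epsilon^{1-\gamma} \ge (1-\gamma)(t\|e_1\|_\infty + \epsilon)^{-\gamma}\, t\, e_1$, so that for $0 < t \le 1$
\[
I_{\la,\epsilon}(te_1) \le \tfrac{t^2}{2}\Bigl(\|e_1\|^2 + \iint_{\R^{2n}} \tfrac{|e_1(x)-e_1(y)|^2}{|x-y|^{n+2s}}\,dx\,dy\Bigr) - \la t (\|e_1\|_\infty + \epsilon)^{-\gamma}\int_{\Om}e_1\,dx.
\]
As $t \to 0^+$ the negative linear term dominates the quadratic, so $I_{\la,\epsilon}(te_1) < 0$ for $t$ small enough, yielding $\inf_{\|v\|\le R} I_{\la,\epsilon}(v) < 0$ for every $\la \in (0,\Lambda)$.

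\emph{Far condition.} Since $(Te_1)^{+} = Te_1 \ge 0$, the singular integral in $I_{\la,\epsilon}(Te_1)$ is nonnegative, so dropping the nonpositive $-\la(\cdots)$ term yields
\[
I_{\la,\epsilon}(Te_1) \le \tfrac{T^2}{2}\Bigl(\|e_1\|^2 + \iint_{\R^{2n}} \tfrac{|e_1(x)-e_1(y)|^2}{|x-y|^{n+2s}}\,dx\,dy\Bigr) - \tfrac{T^{q+1}}{q+1}\int_{\Om} e_1^{q+1}\,dx.
\]
Because $q+1 > 2$ and $e_1 \not\equiv 0$, the right hand side tends to $-\infty$ as $T \to \infty$, so some $T > R$ depending only on $e_1$ and $q$ makes the upper bound less than $-1$, uniformly in $\la \in (0,\Lambda)$ and $\epsilon > 0$.

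The most delicate step is the sphere estimate, where the three competing powers $1-\gamma$, $2$, $q+1$ must be balanced so that the quadratic controls the superlinear near $\|u\|=R$ while $\la$ stays small enough to prevent the sublinear singular contribution from closing the positive gap; the subadditivity inequality is the key observation that eliminates any $\epsilon$-dependence from these constants, which will be essential when passing $\epsilon \to 0$ later in the argument.
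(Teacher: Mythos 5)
Your proposal is correct and follows essentially the same route as the paper: the H\"older/embedding bound on the $(q+1)$-power term, the subadditivity inequality $(u^{+}+\epsilon)^{1-\gamma}-\epsilon^{1-\gamma}\le (u^{+})^{1-\gamma}$ to isolate an $\epsilon$-independent $\lambda$-contribution, a radius $R$ where the quadratic term dominates the superlinear one, a smallness condition on $\lambda$ to preserve the gap $\rho$, the sign of the derivative of $t\mapsto I_{\la,\epsilon}(te_1)$ at $t=0^+$ for the interior condition, and $I_{\la,\epsilon}(te_1)\le I_{0,\epsilon}(te_1)\to-\infty$ for the far condition. The only differences (maximizing the auxiliary function to pick $R$ rather than taking $R$ a small multiple of the critical radius, and using the mean value theorem instead of computing $\lim_{t\to 0}I_{\la,\epsilon}(te_1)/t$) are cosmetic.
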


\begin{proof}
Recalling the definition of $l$ from \eqref{l}, we define $\theta=|\Om|^{\frac{1}{\left(\frac{l}{q+1}\right)'}}$. By H\"{o}lder's inequality and Lemma \ref{emb}, for every $v\in H_0^{1}(\Omega)$, we have
\begin{equation}\label{MP1}
\int_\Om (v^+)^{q+1}~dx \leq \left( \int_\Om |v|^{l}\right)^{\frac{q+1}{l}} |\Om|^{\frac{1}{(\frac{l}{q+1})'}}\leq C\theta\|v\|^{q+1},
\end{equation}
for some positive constant $C$ independent of $v$. Since
$$
\lim_{t\to 0}\frac{I_{\la,\epsilon}(te_1)}{t}=-\la\int_{\Omega}\epsilon^{-\gamma}e_{1}\,dx<0,
$$
we choose $k\in(0,1)$ sufficiently small and set $\|v\|=R :=k(\frac{q+1}{pC\theta})^\frac{1}{q-1}$ such that 
$$
\inf\limits_{\|v\|\leq R}I_{\la,\epsilon}(v)<0.
$$
Moreover, using the fact $R<(\frac{q+1}{pC\theta})^\frac{1}{q-1}$ and the estimate \eqref{MP1}, we have
\begin{equation}\label{up}
I_{0,\epsilon}(v)\geq \frac{R^2}{2}-\frac{C\theta R^{q+1}}{q+1}
:=
2\rho\,(\text{say})>0.
\end{equation}
We define $$\Lambda:=\frac{\rho}{\sup\limits_{\|v\|=R} \left(\displaystyle\frac{1}{1-\gamma}\int_\Om |v|^{1-\gamma}~dx \right)},$$
which is positive. Note that, since $\rho,R$ depends on $k,q,|\Omega|$ and $C$, so does $\Lambda$. We observe that
\begin{equation}\label{knownfact}
(v^{+}+\epsilon)^{1-\gamma}-\epsilon^{1-\gamma}\leq (v^+)^{1-\gamma}.
\end{equation}
Therefore, we have
\begin{align*}
I_{\lambda,\epsilon}(v)&\geq \frac{1}{p}\int_{\Om}|\nabla v|^2\,dx+\iint_{\mathbb{R}^{2n}}\frac{|v(x)-v(y)|^2}{|x-y|^{n+2s}}\, dx dy-\frac{1}{q+1}\int_{\Om}(v^{+})^{q+1}\,dx-\frac{\la}{1-\gamma}\int_{\Om}(v^{+})^{1-\gamma}\,dx\\
&= I_{0,\epsilon}(v)-\frac{\la}{1-\gamma}\int_{\Om}(v^{+})^{1-\gamma}\,dx.
\end{align*}
Hence, using \eqref{up}, for $\la\in(0,\Lambda)$, we get
\begin{align*}
\inf\limits_{\|v\|=R} I_{\la,\epsilon}(v)&\geq\inf\limits_{\|v\|=R}I_{0,\epsilon}(v)-\la \sup\limits_{\|v\|=R} \left(\frac{1}{1-\gamma}\int_\Om |v|^{1-\gamma}~dx \right)\\
&\geq 2\rho -\la \sup\limits_{\|v\|=R} \left(\frac{1}{1-\gamma}\int_\Om |v|^{1-\gamma}~dx \right)\geq \rho.
\end{align*}
Finally, we observe that $I_{0,\epsilon}(te_1) \to -\infty$, as $t\to +\infty$. This gives the existence of $T>R$ such that $I_{0,\epsilon}(Te_1)<-1$. Therefore, 
\[I_{\la,\epsilon}(Te_1)\leq I_{0,\epsilon}(Te_1)<-1,\]
which completes the proof.
\end{proof}

Next, we prove that $I_{\la,\epsilon}$ satisfies the Palais Smale $(PS)_c$ condition.

\begin{Lemma}\label{PS-cond}
$I_{\la,\epsilon}$ satisfies the $(PS)_c$ condition, for any $c \in \mb R$, that is if $\{u_k\}_{k\in\mathbb{N}}\subset H_0^{1}(\Omega)$ is a sequence such that
\begin{equation}\label{PS1}
I_{\la,\epsilon}(u_k)\to c \; \text{and}\; I_{\la,\epsilon}^\prime(u_k) \to 0
\end{equation}
as $k \to \infty$, then $\{u_k\}_{k\in\mathbb{N}}$ contains a strongly convergent subsequence in $H_0^{1}(\Omega)$.
\end{Lemma}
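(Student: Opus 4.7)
The plan is to verify the Palais--Smale condition via the classical three-step strategy: bound the PS sequence in $H_0^{1}(\Omega)$, pass to a weak subsequential limit with compact lower-order convergence, and upgrade to strong convergence using the Hilbert structure.

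\textbf{Step 1 (Boundedness).} I would form the standard combination
$I_{\la,\epsilon}(u_k) - \tfrac{1}{q+1}\langle I'_{\la,\epsilon}(u_k), u_k\rangle$.
The pure-power term $(u_k^+)^{q+1}$ cancels exactly, and both kinetic contributions (local Dirichlet energy and fractional Gagliardo seminorm squared) acquire the positive factor $\tfrac{1}{2} - \tfrac{1}{q+1}$ since $q > 1$. The remaining singular-type integrals are estimated via the elementary bounds $(u_k^+ + \epsilon)^{1-\gamma} - \epsilon^{1-\gamma} \leq (u_k^+)^{1-\gamma}$ (see \eqref{knownfact}) and $(u_k^+ + \epsilon)^{-\gamma} \leq \epsilon^{-\gamma}$, combined with H\"older's inequality and the Sobolev embedding Lemma \ref{emb}, giving a bound of the form $C(1 + \|u_k\|^{1-\gamma})$. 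Using \eqref{PS1} on the left-hand side yields
$$
\Bigl(\tfrac{1}{2} - \tfrac{1}{q+1}\Bigr)\|u_k\|^2 \leq C + o(1)\|u_k\| + C\|u_k\|^{1-\gamma},
$$
which forces boundedness of $\{u_k\}$ in $H_0^{1}(\Omega)$; by Lemma \ref{locnon1} the fractional Gagliardo seminorm of $u_k$ is then automatically bounded as well.

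\textbf{Step 2 (Weak limit).} By reflexivity of $H_0^{1}(\Omega)$, extract a subsequence with $u_k \rightharpoonup u$ in $H_0^{1}(\Omega)$. Since $q + 1 < 2^{*}$, Lemma \ref{emb} together with the Rellich--Kondrachov compact embedding yields $u_k \to u$ strongly in $L^{t}(\Omega)$ for every $t \in [1, 2^{*})$, and a.e.\ in $\Omega$ (up to a further subsequence).

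\textbf{Step 3 (Strong convergence).} From $\langle I'_{\la,\epsilon}(u_k), u_k - u\rangle \to 0$ (valid by \eqref{PS1} and boundedness of $\{u_k - u\}$), I would separate out the lower-order integrals: the singular piece $\la\int_\Omega (u_k^+ + \epsilon)^{-\gamma}(u_k - u)\,dx$ is dominated by $\la\epsilon^{-\gamma}\|u_k - u\|_{L^{1}(\Omega)} \to 0$, and the superlinear piece $\int_\Omega (u_k^+)^{q}(u_k - u)\,dx$ is controlled by $\|(u_k^+)^{q}\|_{L^{(q+1)/q}(\Omega)}\|u_k - u\|_{L^{q+1}(\Omega)} \to 0$ via H\"older, boundedness of $\|u_k\|_{L^{q+1}(\Omega)}$, and strong $L^{q+1}$-convergence from Step 2. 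Consequently
$$
\int_\Omega \nabla u_k \cdot \nabla(u_k - u)\,dx + \iint_{\mathbb{R}^{2n}}\frac{(u_k(x) - u_k(y))\bigl((u_k - u)(x) - (u_k - u)(y)\bigr)}{|x-y|^{n+2s}}\,dx\,dy \to 0.
$$
Weak convergence $u_k \rightharpoonup u$ makes the analogous cross terms (with the leading $u_k$ replaced by $u$) also vanish, so $\|\nabla(u_k - u)\|_{L^{2}(\Omega)}^{2}$ and the squared Gagliardo seminorm of $u_k - u$ both tend to zero, giving $u_k \to u$ strongly in $H_0^{1}(\Omega)$.

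The main obstacle is the compactness in Step 3: the subcritical restriction $q < 2^{*} - 1$ is essential to obtain strong $L^{q+1}$-convergence, which in turn is what lets $(u_k^+)^{q}$ pair compactly with $u_k - u$; this is the only place where the upper bound on $q$ is truly needed, and any attempt to push $q$ to critical would collapse the argument. Step 1 is routine thanks to the cancellation identity and the sublinearity $1 - \gamma < 2$, while Step 2 is standard reflexivity plus Rellich--Kondrachov; the presence of the nonlocal form in the kinetic energy does not interfere because it is weakly lower semicontinuous and the corresponding bilinear pairing behaves exactly like the local Dirichlet form under weak convergence.
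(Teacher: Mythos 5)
Your proposal is correct and follows essentially the same strategy as the paper: boundedness via the combination $I_{\la,\epsilon}(u_k)-\tfrac{1}{q+1}I'_{\la,\epsilon}(u_k)u_k$ with the bound \eqref{knownfact}, weak subsequential limit, and strong convergence by showing the full (local plus Gagliardo) bilinear form of $u_k-u$ tends to zero. The only cosmetic difference is that you pass the lower-order terms to the limit by testing $I'_{\la,\epsilon}(u_k)$ against $u_k-u$ and invoking the compact embedding into $L^{q+1}$ and $L^1$, whereas the paper tests against $u_k$ and $u_0$ separately and uses dominated convergence and Vitali's theorem; both rest on the same subcritical compactness and are equally valid.
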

\begin{proof}
We prove the result in two steps below.\\
\textbf{Step $1$.} First, we claim that if $\{u_k\}_{k\in\mathbb{N}} \subset H_0^{1}(\Omega)$ satisfies \eqref{PS1} then $\{u_k\}_{k\in\mathbb{N}}$ is uniformly bounded in $H_0^{1}(\Omega)$. To this end, by \eqref{knownfact}, for some positive constant $C$ (independent of $k$), we have
\begin{equation}\label{PS2}
\begin{split}
&I_{\la,\epsilon}(u_k)- \frac{1}{q+1}I_{\la,\epsilon}^\prime(u_k)u_k = \left( \frac{1}{2}-\frac{1}{q+1}\right)
\int_{\Omega}|\nabla u_k|^2\,dx+\left( \frac{1}{2}-\frac{1}{q+1}\right)
\iint_{\mathbb{R}^{2n}}\frac{|u(x)-u(y)|^2}{|x-y|^{n+2s}}\,dx dy\\
&\quad-{\la}\int_\Om \frac{(u_k^+ +\epsilon)^{1-\gamma}-\epsilon^{1-\gamma}}{1-\gamma}~dx +\frac{\la}{q+1}\int_\Om (u_k^+ +\epsilon)^{-\gamma}u_k~dx\\
& \geq \left( \frac{1}{2}-\frac{1}{q+1}\right)\|u_k\|^2-C\|u_k\|^{1-\gamma},
\end{split}
\end{equation}
for some positive constant $C$ (independent of $k$), where we have also used Lemma \ref{emb} and H\"older's inequality. 
Noting $q>1$ and using \eqref{PS2}, we obtain
\begin{equation}\label{PS2-new1}
I_{\la,\epsilon}(u_k)- \frac{1}{q+1}I_{\la,\epsilon}^\prime(u_k)u_k \geq C_1\|u_k\|^2 -C\|u_k\|^{1-\gamma},
\end{equation}
for some positive constants $C,C_1$ (independent of $k$). Using \eqref{PS1}, for $k$ large enough, we have
\begin{equation}\label{PS3}
\left| I_{\la,\epsilon}(u_k)- \frac{1}{q+1}I_{\la,\epsilon}^\prime(u_k)u_k\right| \leq C+o(\|u_k\|),
\end{equation}
for some positive constant $C$ (independent of $k$). Combining \eqref{PS2-new1} and \eqref{PS3}, our claim follows. \\
\textbf{Step $2$.} We claim that up to a subsequence, $u_k \to u_0$ strongly in $H_0^{1}(\Omega)$ as $k \to \infty$.\\
By Step $1$, since $\{u_k\}_{k\in\mathbb{N}}$ is uniformly bounded in $H_0^{1}(\Omega)$, due to the reflexivity of $H_0^{1}(\Omega)$, there exists $u_0\in H_0^{1}(\Omega)$ such that up to a subsequence, $u_k \rightharpoonup u_0$ weakly in $H_0^{1}(\Omega)$ as $k \to \infty$. Again, by \eqref{PS1}, we have
\begin{equation*}
\begin{split}
\lim_{k\to \infty}\Bigg(\mc \int_{\Om}\nabla u_k\nabla u_0\,dx+\iint_{\mathbb{R}^{2n}}\frac{(u_k(x)-u_k(y))(u_0(x)-u_0(y))}{|x-y|^{n+2s}}\,dx dy\\
-\la \int_\Om (u_k^+ +\epsilon)^{-\gamma}u_0~dx - \int_\Om (u_k^+)^{q} u_0~dx\Bigg)=0
\end{split}
\end{equation*}
and
\[\lim_{k\to \infty}\Bigg(\mc \int_{\Om}|\nabla u_k|^2\,dx+\iint_{\mathbb{R}^{2n}}\frac{|u_k(x)-u_k(y)|^2}{|x-y|^{n+2s}}\,dx dy- \la \int_\Om (u_k^+ +\epsilon)^{-\gamma}u_k~dx - \int_\Om (u_k^+)^q u_k~dx\Bigg)=0.\]
The preceding two inequalities give,
\begin{equation}\label{PS4}
\begin{split}
&\lim\limits_{k\to\infty}\left(\int_{\Omega}|\nabla(u_k-u_0)|^2\,dx+\iint_{\mathbb{R}^{2n}}\frac{|(u_k(x)-u_k(y))-(u_0(x)-u_0(y))|^2}{|x-y|^{n+2s}}\, dx dy\right)\\
&=\lim\limits_{k\to\infty} \left( \la \int_\Om (u_k^+ +\epsilon)^{-\gamma}u_k~dx + \int_\Om (u_k^+)^q u_k~dx - \la \int_\Om (u_k^+ +\epsilon)^{-\gamma}u_0~dx - \int_\Om (u_k^+)^q u_0~dx\right)\\
&\quad -\lim_{k\to \infty}\left(\int_\Om  \nabla u_0\nabla u_k~dx - \int_\Om |\nabla u_0|^2~dx\right)\\
&\quad -\lim_{k\to\infty}\left(\iint_{\mathbb{R}^{2n}}\frac{(u_0(x)-u_0(y)) (u_k(x)-u_k(y))}{|x-y|^{n+2s}}\,dx dy-\iint_{\mathbb{R}^{2n}}\frac{|u_0(x)-u_0(y)|^2}{|x-y|^{n+2s}}\,dx dy\right).
\end{split}
\end{equation}
Since $u_k \rightharpoonup u_0$ weakly in $H_0^{1}(\Omega)$ as $k \to \infty$, we observe that
\begin{equation}\label{PS5}
\lim_{k\to \infty}\left(\int_\Om  \nabla u_0\nabla u_k~dx - \int_\Om |\nabla u_0|^2~dx\right)=0.
\end{equation}
Further, since $u_k \rightharpoonup u_0$ weakly in $H_0^{1}(\Omega)$ as $k \to \infty$, it follows that
\begin{equation}\label{PS5new}
\lim_{k\to\infty}\left(\iint_{\mathbb{R}^{2n}}\frac{(u_0(x)-u_0(y)) (u_k(x)-u_k(y))}{|x-y|^{n+2s}}\,dx dy-\iint_{\mathbb{R}^{2n}}\frac{|u_0(x)-u_0(y)|^2}{|x-y|^{n+2s}}\,dx dy\right)=0.
\end{equation}
Indeed, the weak convergence of $u_k$ to $u_0$ implies that
$$
\frac{u_k(x)-u_k(y)}{|x-y|^{n+2s}}\rightharpoonup\frac{u_0(x)-u_0(y)}{|x-y|^{n+2s}}\quad\text{weakly\,in}\quad L^2(\mathbb{R}^{2n}),
$$
which combined with the fact that
$$
\frac{u_0(x)-u_0(y)}{|x-y|^\frac{n+2s}{2}}\in L^2(\mathbb{R}^{2n})
$$
proves \eqref{PS5new}.

On the other hand, since
\begin{align*}
\left|(u_k^++\epsilon)^{-\gamma}u_0\right| \leq\epsilon^{-\gamma}u_0\text{ and }
\int_\Om \left|\epsilon^{-\gamma}u_0\right|dx \leq \epsilon^{-\gamma}\int_\Om|u_0|~dx< +\infty,
\end{align*}
by the Lebesgue's Dominated convergence theorem, it follows that
\begin{equation}\label{PS6}
\lim_{k \to \infty} \int_\Om (u_k^+ +\epsilon)^{-\gamma}u_0~dx = \int_\Om (u_0^+ +\epsilon)^{-\gamma}u_0~dx.
\end{equation}
Since $u_k \to u_0$ pointwise almost everywhere in $\Om$ and for any measurable subset $E$ of $\Om$,
\begin{equation*}
\begin{split}
\int_E |(u_k^++\epsilon)^{-\gamma}u_k |~dx&\leq \int_E\epsilon^{-\gamma}|u_k|~dx\leq\|\epsilon^{-\gamma}\|_{L^\infty(\Om)}\|u_k\|_{L^{l}(\Om)}|E|^{\frac{l-1}{l}}\leq C(\epsilon)|E|^{\frac{l-1}{l}},
\end{split}
\end{equation*}
using Vitali's convergence theorem, we have
\begin{equation}\label{PS7}
\lim\limits_{k\to\infty} \la \int_\Om (u_k^+ +\epsilon)^{-\gamma}u_k~dx = \la \int_\Om (u_0^+ +\epsilon)^{-\gamma}u_0~dx.
\end{equation}
Since $q+1<l$, we have
\[\int_E |(u_k^+)^q u_0|~dx \leq \|u_0\|_{L^{l}(\Om)} \left(\int_E (u_k^+)^{ql^{'}}~dx\right)^{\frac{1}{l{'}}}\leq C_3 |E|^{\alpha}  \]
and
\[\int_E |(u_k^+)^q u_k|~dx \leq \|u_k\|_{L^{l}(\Om)} \left(\int_E (u_k^+)^{ql{'}}~dx\right)^{\frac{1}{l{'}}}\leq C_4 |E|^{\beta}  \]
for some positive constants $C_3,C_4,\alpha$ and $\beta$. Again using Vitali's convergence theorem, we get
\begin{equation}\label{PS8}
\lim_{k \to \infty} \int_\Om (u_k^+)^qu_0~dx  =\int_\Om (u_0^+)^qu_0~dx,
\end{equation}
and
\begin{equation}\label{PS9}
\lim_{k \to \infty} \int_\Om (u_k^+)^qu_k~dx  =\int_\Om (u_0^+)^qu_0~dx.
\end{equation}
Using \eqref{PS5}, \eqref{PS5new}, \eqref{PS6}, \eqref{PS7}, \eqref{PS8} and \eqref{PS9} in \eqref{PS4}, we obtain $u_k\to u_0$ strongly in $H_0^{1}(\Omega)$ as $k\to\infty$ which proves our claim.
\end{proof}

\begin{Remark}\label{multrmk}
Using Lemma \ref{MP-geo}, Lemma \ref{PS-cond} and the Mountain Pass Lemma, for every $\la\in(0,\Lambda)$, there exists $\zeta_\epsilon \in H_0^{1}(\Omega)$ such that $I_{\lambda,\epsilon}^\prime(\zeta_\epsilon)=0$ {and}
$$
I_{\lambda,\epsilon}(\zeta_{\epsilon})=\inf_{\gamma\in\Gamma}\max_{t \in [0,1]}I_{\la,\epsilon}(\gamma (t)) \geq \rho >0,
$$
where 
$$
\Gamma =\big\{\gamma \in C([0,1],H_0^{1}(\Omega)):\gamma(0)=0, \gamma(1)=Te_1\big\}.
$$
Moreover, as a consequence of Lemma \ref{MP-geo}, since for every $\la\in(0,\Lambda)$ we have $\inf\limits_{\|v\|\leq R} I_{\la,\epsilon}(v)<0$, by the weak lower semicontinuity of $I_{\la,\epsilon}$, there exists a nonzero $\nu_\epsilon\in H_0^{1}(\Omega)$ such that $\|\nu_\epsilon\| \leq R$ and
\begin{equation}\label{limit-pass}
\inf\limits_{\|v\|\leq R} I_{\la,\epsilon}(v) =I_{\la,\epsilon}(\nu_\epsilon)<0<\rho \leq I_{\la,\epsilon}(\zeta_\epsilon).
\end{equation}
Thus, $\zeta_\epsilon$ and $\nu_\epsilon$ are two different non trivial critical points of $I_{\la,\epsilon}$, provided $\la\in(0,\Lambda)$.
\end{Remark}

\begin{Lemma}\label{non-negative}
The critical points $\zeta_\epsilon$ and $\nu_\epsilon$ of $I_{\la,\epsilon}$ are nonnegative in $\Omega.$
\end{Lemma}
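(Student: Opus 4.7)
The plan is to take $\phi = u^-$ as a test function in the Euler--Lagrange equation, and show that the resulting sandwich inequality forces $u^- \equiv 0$. Since the argument is identical for $\zeta_\epsilon$ and $\nu_\epsilon$, let $u$ denote either. As $I_{\la,\epsilon}\in C^1(H_0^{1}(\Omega),\mathbb{R})$ and $I_{\la,\epsilon}'(u)=0$ by Remark \ref{multrmk}, the identity
\[
\int_{\Om}\nabla u\nabla\phi\,dx + \iint_{\mathbb{R}^{2n}}\frac{(u(x)-u(y))(\phi(x)-\phi(y))}{|x-y|^{n+2s}}\,dx\,dy = \la\int_{\Om}(u^{+}+\epsilon)^{-\gamma}\phi\,dx + \int_{\Om}(u^+)^{q}\phi\,dx
\]
holds for every $\phi \in H_0^{1}(\Omega)$. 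Since $u\in H_0^{1}(\Omega)$, also $u^{-}\in H_0^{1}(\Omega)$, so this is an admissible choice.

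For the local term, the standard identity $\nabla u\cdot\nabla u^{-}=-|\nabla u^{-}|^2$ a.e.\ (coming from $\nabla u^{+}\cdot\nabla u^{-}=0$ a.e.) gives $\int_{\Om}\nabla u\nabla u^{-}\,dx = -\int_{\Om}|\nabla u^{-}|^2\,dx$. For the nonlocal term, writing $u=u^{+}-u^{-}$ and expanding yields, for a.e.\ $(x,y)\in\mathbb{R}^{2n}$,
\[
(u(x)-u(y))(u^{-}(x)-u^{-}(y)) = -\bigl(u^{+}(x)u^{-}(y)+u^{+}(y)u^{-}(x)\bigr) - |u^{-}(x)-u^{-}(y)|^{2} \leq -|u^{-}(x)-u^{-}(y)|^{2},
\]
since each of $u^{+}(x)u^{-}(y)$ and $u^{+}(y)u^{-}(x)$ is nonnegative. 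Integrating against $|x-y|^{-(n+2s)}$ over $\mathbb{R}^{2n}$ yields the corresponding integral inequality.

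On the right-hand side, $(u^+)^{q}u^{-}\equiv 0$ because $u^{+}u^{-}=0$, while $(u^{+}+\epsilon)^{-\gamma}u^{-}\geq 0$ everywhere. Combining all of the above with the equation tested at $\phi=u^{-}$, I obtain the sandwich
\[
0 \leq \la\int_{\Om}(u^{+}+\epsilon)^{-\gamma}u^{-}\,dx = -\int_{\Om}|\nabla u^{-}|^2\,dx + \iint_{\mathbb{R}^{2n}}\frac{(u(x)-u(y))(u^{-}(x)-u^{-}(y))}{|x-y|^{n+2s}}\,dx\,dy \leq -\int_{\Om}|\nabla u^{-}|^2\,dx \leq 0.
\]
Therefore $\|\nabla u^{-}\|_{L^2(\Om)}=0$, and since $u^{-}=0$ in $\mathbb{R}^n\setminus\Om$, the Poincaré inequality forces $u^{-}\equiv 0$, i.e.\ $u\geq 0$ a.e.\ in $\Om$.

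The only nontrivial step is the nonlocal pointwise inequality, which is standard for the fractional Laplacian but must be verified across the full $\mathbb{R}^{2n}$ rather than $\Om\times\Om$; once the decomposition $u=u^{+}-u^{-}$ is written out, it reduces to the nonnegativity of the cross terms and requires no further analytic input.
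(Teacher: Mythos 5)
Your proof is correct and follows essentially the same route as the paper: the paper tests the critical point equation with $v=\min\{u,0\}=-u^{-}$ and outsources the computation to \cite[Lemma 3.1]{GU}, while you test with $u^{-}$ and write out the local identity $\nabla u\cdot\nabla u^{-}=-|\nabla u^{-}|^2$ and the nonlocal pointwise inequality explicitly. The sign bookkeeping on both sides and the conclusion via Poincar\'e are all in order.
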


\begin{proof}
Let $u=\zeta_\epsilon$ or $\nu_\epsilon$. Therefore, since the integrand
$
\la(u^+ +\epsilon)^{-\gamma}+(u^+)^q
$
is nonnegative in $\Om$, testing \eqref{apx} with $v=\min\{u,0\}$ and proceeding exactly as in the proof of \cite[Pages 11-12, Lemma 3.1]{GU} (or \cite[Page 11, Lemma 3.1]{Aro-Rad}), we get $u\geq 0$ in $\Om$. This completes the proof.
\end{proof}

\begin{Lemma}\label{apriori}
There exists a constant $\Theta>0$ (independent of $\epsilon$) such that $\|v_\epsilon\| \leq \Theta$, where $v_\epsilon = \zeta_\epsilon$ or $\nu_\epsilon$.
\end{Lemma}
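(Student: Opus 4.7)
For $\nu_\epsilon$ the bound is essentially free: by Remark \ref{multrmk} we have $\|\nu_\epsilon\|\le R$, and the radius $R$ produced in Lemma \ref{MP-geo} depends only on $q$, $|\Om|$, the Sobolev embedding constant and the free parameter $k\in(0,1)$---none of which involves $\epsilon$. Hence the real work is to bound the mountain-pass critical point $\zeta_\epsilon$, and my plan is the classical mountain-pass recipe adapted to the mixed operator.

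The crux is to ceiling $c_\epsilon:=I_{\la,\epsilon}(\zeta_\epsilon)$ by an $\epsilon$-free constant. The structural observation that makes this possible is that, since $\la>0$ and $1-\gamma>0$, the map $t\mapsto(t+\epsilon)^{1-\gamma}-\epsilon^{1-\gamma}$ is nonnegative on $[0,\infty)$; consequently on $\{v\ge 0\}$ one has $I_{\la,\epsilon}(v)\le J(v)$, where
\[
J(v):=\tfrac12\|v\|^2+\tfrac12\iint_{\R^{2n}}\tfrac{|v(x)-v(y)|^2}{|x-y|^{n+2s}}\,dx\,dy-\tfrac{1}{q+1}\int_\Om v^{q+1}\,dx
\]
is independent of $\epsilon$. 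Testing the inf-max definition of the mountain-pass level against the admissible path $t\mapsto tTe_1\in \Gamma$ (with $T$ as in Lemma \ref{MP-geo} and $e_1\ge 0$ from Lemma \ref{evpthm}), I obtain
\[
c_\epsilon \;\le\; \max_{t\in[0,1]} J(tTe_1)=:M,
\]
a constant depending only on $T$, $e_1$ and $q$.

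Next I repeat the computation of Step 1 in Lemma \ref{PS-cond}, but applied to $\zeta_\epsilon$ itself. Using Lemma \ref{non-negative} (so that $\zeta_\epsilon^+=\zeta_\epsilon$) and the critical-point identity $\langle I'_{\la,\epsilon}(\zeta_\epsilon),\zeta_\epsilon\rangle=0$, I form
\[
c_\epsilon=I_{\la,\epsilon}(\zeta_\epsilon)-\tfrac{1}{q+1}\langle I'_{\la,\epsilon}(\zeta_\epsilon),\zeta_\epsilon\rangle,
\]
drop the nonnegative nonlocal square and the nonnegative term $\tfrac{\la}{q+1}\int_\Om(\zeta_\epsilon+\epsilon)^{-\gamma}\zeta_\epsilon\,dx$, use the elementary estimate $(\zeta_\epsilon+\epsilon)^{1-\gamma}-\epsilon^{1-\gamma}\le \zeta_\epsilon^{1-\gamma}$, and apply H\"older together with Lemma \ref{emb} to the resulting $L^{1-\gamma}$ integral. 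This yields
\[
\Bigl(\tfrac12-\tfrac{1}{q+1}\Bigr)\|\zeta_\epsilon\|^2 \;\le\; M + C\|\zeta_\epsilon\|^{1-\gamma},
\]
with $C$ free of $\epsilon$. Since $q>1$ and $1-\gamma<2$, this algebraic inequality confines $\|\zeta_\epsilon\|$ to a bounded set, and taking $\Theta$ to be the maximum of $R$ and the resulting bound gives the lemma. The only genuine obstacle is the $\epsilon$-uniform ceiling on $c_\epsilon$; once the favourable sign of the singular perturbation is noticed, the rest is routine.
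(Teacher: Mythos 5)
Your proposal is correct and follows essentially the same route as the paper: bound the mountain-pass level by $\max_{t\in[0,1]} I_{0,\epsilon}(tTe_1)$ (your $M$ is the paper's $A$, and $I_{0,\epsilon}$ is exactly your $\epsilon$-free $J$), then combine $I_{\la,\epsilon}(\zeta_\epsilon)$ with $\tfrac{1}{q+1}\langle I'_{\la,\epsilon}(\zeta_\epsilon),\zeta_\epsilon\rangle=0$ and the estimate $(\zeta_\epsilon+\epsilon)^{1-\gamma}-\epsilon^{1-\gamma}\le\zeta_\epsilon^{1-\gamma}$ to get $(\tfrac12-\tfrac1{q+1})\|\zeta_\epsilon\|^2\le M+C\|\zeta_\epsilon\|^{1-\gamma}$. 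No gaps; the $\nu_\epsilon$ case and the $\epsilon$-independence of $R$ are handled as in the paper.
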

\begin{proof}
We notice that the result trivially holds if $v_\epsilon = \nu_\epsilon$. Thus, it is enough to deal with the case when $v_\epsilon= \zeta_\epsilon$. Recalling the terms from Lemma \ref{MP-geo} and Remark \ref{multrmk}, we define $A = \max\limits_{t \in [0,1]}I_{0,\epsilon}(tTe_1)$ then
\[A \geq \max_{t \in [0,1]} I_{\la,\epsilon}(tTe_1) \geq\inf_{\gamma\in\Gamma}\max_{t \in [0,1]}I_{\la,\epsilon}(\gamma (t)) = I_{\la,\epsilon}(\zeta_\epsilon)\geq \rho>0>I_{\la,\epsilon}(\nu_\epsilon).\]
Therefore
\begin{equation}\label{ap1}
\frac{1}{2}\int_{\Om}|\nabla\zeta_\epsilon|^2\,dx+\frac{1}{2}\iint_{\mathbb{R}^{2n}}\frac{|\zeta_{\epsilon}(x)-\zeta_{\epsilon}(y)|^2}{|x-y|^{n+2s}}\, dx dy-{\la}\int_\Om \frac{(\zeta_\epsilon +\epsilon)^{1-\gamma}-\epsilon^{1-\gamma}}{1-\gamma}~dx -\frac{1}{q+1}\int_\Om \zeta_\epsilon^{q+1}~dx \leq A.
\end{equation}
Choosing $\phi=-\frac{\zeta_\epsilon}{2}$ as a test function in \eqref{apx} we obtain
\begin{equation}\label{ap2}
-\frac{1}{q+1}\int_{\Om}|\nabla\zeta_\epsilon|^2\,dx-\frac{1}{q+1}\iint_{\mathbb{R}^{2n}}\frac{|\zeta_{\epsilon}(x)-\zeta_{\epsilon}(y)|^2}{|x-y|^{n+2s}}\, dx dy+\frac{\la}{q+1}\int_{\Om}\frac{\zeta_\epsilon}{(\zeta_{\epsilon}+\epsilon)^{\gamma}}\,dx+\frac{1}{q+1}\int_{\Om}\zeta_{\epsilon}^{q+1}\,dx=0.
\end{equation}
Adding \eqref{ap1} and \eqref{ap2} we have
\begin{align*}
\left(\frac{1}{2}-\frac{1}{q+1}\right)\|\zeta_{\epsilon}\|^2
 &\leq {\la}\int_\Om \frac{(\zeta_\epsilon +\epsilon)^{1-\gamma}-\epsilon^{1-\gamma}}{1-\gamma}~dx -\frac{\la}{q+1}\int_{\Om}\frac{\zeta_\epsilon}{(\zeta_\epsilon +\epsilon)^{\gamma}}\,dx+A\\
 & \leq C\int_\Om {\zeta_\epsilon} ^{1-\gamma}+A\leq C\|\zeta_\epsilon\|^{1-\gamma}+A,
\end{align*}
for some positive constant $C$ being independent of $\epsilon$, where we have used H\"older's inequality and Lemma \ref{emb}. Thus, since $q>1$, the sequence $\{\zeta_\epsilon\}$ is uniformly bounded in $H_0^{1}(\Omega)$ with respect to $\epsilon$. This completes the proof.
\end{proof}

\subsection{Proof of Theorem \ref{pu2thm}}
By Lemma \ref{non-negative} and Lemma \ref{apriori}, up to a subsequence, $\zeta_\epsilon \rightharpoonup \zeta_0$ and $\nu_\epsilon \rightharpoonup \nu_0$ weakly in $H_0^{1}(\Omega)$ as $\epsilon \to 0^+$, for some nonnegative $\zeta_0,\nu_0\in H_0^{1}(\Omega)$.\\
\textbf{Step $1$.} Let $v_0=\zeta_0$ or $\nu_0$. Here, we prove that $v_0\in H_0^{1}(\Omega)$ is a weak solution of the problem \eqref{psp}. Indeed, for any $\epsilon\in(0,1)$ and $t\geq 0$, we notice that
$$
{\la}{(t+\epsilon)^{-\gamma}}+t^q\geq {\la}{(t+1)^{-\gamma}}+t^q\geq \text{min}\left\{1,\frac{\la}{2}\right\}:=C>0, \text{ say}.
$$
Therefore, recalling that $v_\epsilon=\zeta_\epsilon$ or $\nu_\epsilon$, we have
\begin{equation}\label{eps}
-\Delta v_\epsilon+(-\Delta)^s v_{\epsilon}={\la}{(v_\epsilon+\epsilon)^{-\gamma}}+v_\epsilon^q\geq C>0.
\end{equation}
Using \cite[Lemma 3.1]{GU} (see also \cite[Lemma 3.1]{Aro-Rad}), we get the existence of $\xi\in H_0^{1}(\Omega)\cap L^{\infty}(\Omega)$ satisfying
$$
-\Delta\xi+(-\Delta)^s \xi=C\text{ in }\Omega,\,\,\xi>0\text{ in }\Omega,\,\,\xi=0\text{ in }\mathbb{R}^n\setminus\Omega
$$
such that for every $\omega\Subset\Om$, there exists a constant $c(\omega)>0$ satisfying $\xi\geq c(\omega)>0$ in $\Om$. 
Then, for every nonnegative $\phi\in H_0^{1}(\Omega)$, we have
\begin{align*}
&\int_{\Om}\nabla v_\epsilon\nabla\phi\,dx+\iint_{\mathbb{R}^{2n}}\frac{(v_{\epsilon}(x)-v_{\epsilon}(y))(\phi(x)-\phi(y))}{|x-y|^{n+2s}}\,dx dy=\int_{\Om}\Big({\la}{(v_\epsilon+\epsilon)^{-\gamma}}+v_\epsilon^q\Big)\phi\,dx\geq\int_{\Om}C\phi\,dx\\
&=\int_{\Om}\nabla \xi\nabla\phi\,dx+\iint_{\mathbb{R}^{2n}}\frac{(\xi(x)-\xi(y))(\phi(x)-\phi(y))}{|x-y|^{n+2s}}\,dx dy.
\end{align*}
Testing with $\phi=(\xi-v_\epsilon)^+$ in the above estimate, we obtain
$$
\int_{\Om}|\nabla(\xi-v_\epsilon)^+|^2\,dx+\iint_{\mathbb{R}^{2n}}\frac{(\xi(x)-\xi(y)-(v_{\epsilon}(x)-v_{\epsilon}(y))((\xi-v_{\epsilon})^+(x)-(\xi-v_{\epsilon})^+(y))}{|x-y|^{n+2s}}\,dx dy\leq 0.
$$
Following the same arguments as in the proof of \cite[Lemma 9]{LL}, the double integral in the above estimate become nonnegative. Hence, using this fact in the above inequality gives $v_\epsilon\geq \xi$ in $\Om$. Hence there exists a constant $c(\omega)>0$ (independent of $\epsilon$) such that
\begin{equation}\label{uniform}
v_\epsilon\geq c(\omega)>0,\text{ for every }\omega\Subset\Om. 
\end{equation}
Using Lemma \ref{apriori} and the fact \eqref{uniform} along with the hypothesis on $q$, we can pass to the limit in \eqref{eps} to obtain
\begin{equation*}
\begin{split}
&\int_{\Om}\nabla v_0\nabla \phi\,dx+\iint_{\mathbb{R}^{2n}}\frac{(v_0(x)-v_0(y)-(v_{\epsilon}(x)-v_{\epsilon}(y))((\xi-v_{\epsilon})^+(x)-(\xi-v_{\epsilon})^+(y))}{|x-y|^{n+2s}}\,dx dy\\
&=\la\int_{\Om}{\phi}{v_0^{-\gamma}}(x)\,dx+\int_{\Om}v_0^{q}\phi\,dx,
\end{split}
\end{equation*}
for every $\phi\in C_c^{1}(\Omega)$. Hence the claim follows.\\

\textbf{Step $2$.} Now we establish that $\zeta_0\neq \nu_0$. Choosing $\phi=v_\epsilon\in H_0^{1}(\Om)$ as a test function in \eqref{apx}, we get
$$
\int_{\Om}|\nabla v_\epsilon|^{2}\,dx+\iint_{\mathbb{R}^{2n}}\frac{|v_{\epsilon}(x)-v_{\epsilon}(y)|^2}{|x-y|^{n+2s}}\,dx dy=\la\int_{\Om}{v_\epsilon}{(v_\epsilon +\epsilon)^{-\gamma}}\,dx+\int_{\Om}v_\epsilon^{q+1}\,dx.
$$
Since $q+1<l$, using Lemma \ref{emb} we obtain
\begin{equation}\label{nl}
\lim\limits_{\epsilon\to 0^+}\int_{\Om}(v_\epsilon)^{q+1}\,dx=\int_{\Om}v_0^{q+1}\,dx.
\end{equation}
Moreover, since
$$
0\leq {v_\epsilon}{(v_\epsilon +\epsilon)^{-\gamma}}\leq v_\epsilon^{1-\gamma},
$$
using Vitali's convergence theorem, it follows that
$$
\la\lim\limits_{\epsilon\to 0^+}\int_{\Om}{v_\epsilon}{(v_\epsilon +\epsilon)^{-\gamma}}\,dx=\la\int_{\Om}v_0^{1-\gamma}\,dx.
$$
Therefore, we obtain
\begin{equation}\label{1}
\lim_{\epsilon\to 0^+}\Big(\int_{\Om}|\nabla v_\epsilon|^{2}\,dx+\iint_{\mathbb{R}^{2n}}\frac{|v_{\epsilon}(x)-v_{\epsilon}(y)|^2}{|x-y|^{n+2s}}\,dx dy\Big)=\la\int_{\Om}v_0^{1-\gamma}\,dx+\int_{\Om}v_0^{q+1}\,dx.
\end{equation}
By Remark \ref{tstrmk}, choosing $\phi=v_0$ as a test function in \eqref{psp} we get
\begin{equation}\label{2}
\int_{\Om}|\nabla v_0|^{2}\,dx+\iint_{\mathbb{R}^{2n}}\frac{|v_0(x)-v_0(y)|^2}{|x-y|^{n+2s}}\,dx dy=\la\int_{\Om}v_0^{1-\gamma}\,dx+\int_{\Om}v_0^{q+1}\,dx.
\end{equation}
Hence from \eqref{1} and \eqref{2}, we obtain
\begin{equation}\label{3}
\lim\limits_{\epsilon\to 0^+}\Big(\int_{\Om}|\nabla v_\epsilon|^{2}\,dx+\iint_{\mathbb{R}^{2n}}\frac{|v_{\epsilon}(x)-v_{\epsilon}(y)|^2}{|x-y|^{n+2s}}\,dx dy\Big)=\int_{\Om}|\nabla v_0|^{2}\,dx+\iint_{\mathbb{R}^{2n}}\frac{|v_0(x)-v_0(y)|^2}{|x-y|^{n+2s}}\,dx dy.
\end{equation}
Using Vitali's convergence theorem, we have
\begin{equation}\label{4}
\lim\limits_{\epsilon\to 0^+}\int_{\Om}[(v_\epsilon +\epsilon)^{1-\gamma}-\epsilon^{1-\gamma}]\,dx=\int_{\Om}v_0^{1-\gamma}\,dx.
\end{equation}
From \eqref{nl}, \eqref{3} and \eqref{4}, we have
$
\lim\limits_{\epsilon\to 0^+}I_{\la,\epsilon}(v_\epsilon)=I_{\la}(v_0),
$
which along with \eqref{limit-pass} gives $\zeta_0\neq \nu_0$.


\noindent {\textsf{Prashanta Garain\\Department of Mathematical Sciences\\
Indian Institute of Science Education and Research Berhampur\\
Berhampur, Odisha 760010, India\\
Department of Mathematics,\\
Indian Institute of Technology Indore,\\
Khandwa Road, Simrol, Indore 453552, India}\\ 
\textsf{e-mail}: pgarain92@gmail.com\\

\end{document}